\documentclass[12pt,a4paper,reqno]{amsart}
\usepackage{fancyhdr}
\usepackage{appendix}
\usepackage{amssymb,amscd,amsxtra,calc}
\usepackage{mathrsfs}
\usepackage{amsmath}
\usepackage{multirow}
\usepackage{verbatim}
\usepackage{mathtools}
\usepackage[all]{xy}
\usepackage[colorlinks,linkcolor=red,anchorcolor=blue,citecolor=blue]{hyperref}
\usepackage{tikz}
\usepackage{tikz-cd}
\theoremstyle{plain}
    \newtheorem{thm}{Theorem}[section]

    \newtheorem{lemma}[thm]{Lemma}
    \newtheorem{proposition}[thm]{Proposition}
    
    \newtheorem{theorem}[thm]{Theorem}

\theoremstyle{definition}
    \newtheorem{definition}[thm]{Definition}

    \newtheorem*{notation*}{Notation and Terminology}
    \newtheorem{remark}[thm]{Remark}

\theoremstyle{remark}

\makeatletter

\newcommand{\Rmnum}[1]{\expandafter\@slowromancap\romannumeral #1@}
\makeatother
\numberwithin{equation}{section}

\newcommand{\C}{\mathbb C}
\newcommand{\R}{\mathbb R}
\newcommand{\PP}{\mathbb P}
\newcommand{\Span}{\operatorname{span}}
\newcommand{\ord}{\operatorname{ord}}
\newcommand{\AS}{\mathrm{AS}}
\newcommand{\KS}{\mathrm{KS}}

\title[Proof of the planar Khavinson--Shapiro conjecture]
      {Proof of the planar Khavinson--Shapiro conjecture}
\author{Feng Shao}
\author{Weicheng Zhan}

\address{
\textsc{Feng Shao, School of Mathematical Sciences, Xiamen University, Xiamen 361005, P. R. China}
}
\email{shaofeng@amss.ac.cn, shaofeng@xmu.edu.cn}

\address{
\textsc{Weicheng Zhan, School of Mathematical Sciences, Xiamen University, Xiamen 361005, P. R. China}
}
\email{zhanweicheng@amss.ac.cn}

\begin{document}
\begin{abstract}
Let $\Omega\subset\R^2$ be a bounded domain whose boundary is a finite union of pairwise disjoint Jordan curves.
We prove the planar Khavinson--Shapiro conjecture in this setting: if every polynomial on $\R^2$ agrees on $\partial\Omega$ with a harmonic polynomial, then $\partial\Omega$ is an ellipse and $\Omega$ is its bounded interior.
\end{abstract}
\subjclass[2020]{Primary 31A25; Secondary 14H50, 35J05}
\keywords{Khavinson--Shapiro conjecture, polynomial Dirichlet problem,
harmonic polynomial, plane algebraic curve, Fischer operator}

\maketitle
\tableofcontents
\section{Introduction}

Ellipsoids have an exceptional algebraic property for the Dirichlet problem: every polynomial on their boundary agrees there with a harmonic polynomial.
This follows from Fischer's decomposition; see \cite{Fischer1918,Shapiro1989,Axler2004,Khavinson2014}.
In 1992, Khavinson and Shapiro conjectured that no other bounded domains have this property~\cite[p.~460]{Khavinson1992}.
The conjecture asks, in a particularly concrete form, how much of the geometry of a domain is encoded by exact polynomial solvability.

The hypothesis concerns polynomial data of arbitrary degree, whereas the conclusion is a single quadratic equation for the boundary.
In the plane, complex coordinates connect the two: after replacing $\bar z$ by an independent variable $w$, harmonic polynomials become sums $A(z)+B(w)$.
We use this separated form to prove the planar conjecture in the formulation of~\cite[Conjecture~1.1]{Tikaradze2021}.

Our main result is the following.

\begin{theorem}[{Planar Khavinson–Shapiro conjecture}]\label{thm1-1}
Let $\Omega\subset\R^2$ be a bounded domain whose boundary is a finite union of pairwise disjoint Jordan curves.
Suppose that, for every $f\in\R[x,y]$, there exists a harmonic polynomial $u_f\in\R[x,y]$ such that
\[
u_f=f\qquad\text{on }\partial\Omega.
\]
Then $\partial\Omega$ is an ellipse and $\Omega$ is its bounded interior.
\end{theorem}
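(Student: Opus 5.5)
We first make $\partial\Omega$ algebraic. Apply the hypothesis to $f=x^2+y^2$; since $\Omega$ is bounded, $x^2+y^2$ is not harmonic, so $u_f\neq f$. Hence $P:=f-u_f$ is a nonzero real polynomial vanishing on $\partial\Omega$. Therefore $\partial\Omega\subset Z(P)$ is a union of Jordan curves lying in a real plane algebraic curve. Let $\Gamma$ be the Zariski closure of $\partial\Omega$ in $\C^2$, take its reduced defining polynomial $Q$ of minimal degree $n$, and let $I=(Q)\cap\R[x,y]$ be the ideal of polynomials vanishing on $\partial\Omega$. A polynomial vanishing on a Jordan arc vanishes on the Zariski closure of that arc, so $I$ is exactly the set of real polynomials vanishing on $\partial\Omega$.

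Next we translate the hypothesis into complex coordinates. Put $z=x+iy$ and $w=\bar z$, treated as an independent variable, and write $Q(z,w)$. Harmonic polynomials become $A(z)+B(w)$. The hypothesis says that $\C[z,w]=\C[z]+\C[w]+(Q)$: every polynomial is congruent mod $Q$ to a separated sum. Complexification is harmless because the real and imaginary parts can be treated separately. We then count dimensions by degree filtration. Polynomials of degree $\le N$ have dimension $\binom{N+2}{2}$. The sums $A+B$ of degree $\le N$ have dimension $2N+1$. The space $(Q)_{\le N}$ has dimension $\binom{N-n+2}{2}$. For surjectivity at large degree we need
\[
\binom{N+2}{2}\le 2N+1+\binom{N-n+2}{2}.
\]
The leading term $nN$ on the left must not exceed $2N$, so $n\le 2$.

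This naive count only works if reductions do not raise degree, and that is the main obstacle. A polynomial of degree $N$ might be matched by $A+B$ of much higher degree whose top parts cancel against a multiple of $Q$. We handle this with the top-degree form $q$ of $Q$. A leading-term cancellation $a z^m+b w^m\in(q)$ forces $q$ to divide $az^m+bw^m$, so $q$ is a product of distinct linear forms $z-\zeta w$ with $\zeta^m=-b/a$. I would run a graded (Fischer-type) argument to show the high-degree part of the quotient $\C[z,w]/(Q)$ is spanned by separated monomials. Comparing Hilbert functions, $\dim(\C[z,w]/(Q))_{\le N}\sim nN$, against the span of $z^k,w^k$ modulo $Q$, which grows at most like $2N$, again gives $n\le2$ once we show this span embeds faithfully in a filtered sense. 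If that filtration argument is troublesome, I would instead use the Fischer operator and the Hilbert-series inequality directly on the associated graded ring $\C[z,w]/(q)$. There the image of $\C[z]+\C[w]$ has dimension at most $2$ in each degree, while the quotient has dimension $n$ in each degree $\ge n-1$.

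Finally we treat small degrees. A degree-one $Q$ would make $\partial\Omega$ lie on a line, which is impossible for a Jordan curve. So $n=2$, and $\partial\Omega$ consists of Jordan curves lying in a real conic. The only real conic containing a closed Jordan curve is an ellipse, since parabolas, hyperbolas and line pairs contain no compact closed curve, and an ellipse contains exactly one Jordan curve. Thus $\partial\Omega$ is a single ellipse. The bounded domain with this boundary is its interior, because the exterior is unbounded.
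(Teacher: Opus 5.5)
Your outer steps are fine and agree with the paper. The datum $x^2+y^2$ places $\partial\Omega$ on an algebraic curve. The reduced polynomial $Q$ of its Zariski closure satisfies $\C[z,w]=\C[z]+\C[w]+(Q)$ (compare Proposition~\ref{prop3-1}). The closing conic argument is correct.

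\textbf{The gap.} It lies in the step that should give $n\le2$. You correctly name degree raising as the obstacle, but you do not overcome it. The phrases ``run a graded (Fischer-type) argument to show the high-degree part of the quotient is spanned by separated monomials'' and ``once we show this span embeds faithfully in a filtered sense'' restate the claim that needs proof.

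The fallback, a Hilbert-function count in $\C[z,w]/(q)$ with $q$ the top form, is invalid, because $(\AS)$ for $Q$ does not pass to its top form. Take $Q=w-z^k$. Then $\C[z,w]/(Q)\cong\C[z]$, so $(\AS)$ holds. Yet in $\C[z,w]/(z^k)$ the separated forms span at most one dimension in each degree $\ge k$, against $k$ for the quotient. Concretely, the class of $z^{k-1}w^N$ has a representative of degree $N+k-1$, but every separated representative has degree at least $kN+k-1$. The graded object you would actually need is the associated graded of the \emph{filtered image} of $\C[z]+\C[w]$. That object records exactly the cancellations you are worried about. Your sketch uses nothing that rules out this example, so it cannot be completed without the special shape of $Q$ (cf.\ Remark~\ref{rem7-1}).

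\textbf{What is missing.} You need precise control of top-degree cancellation, and this must use that $Q$ divides $zw-H(z)-H^*(w)$. If $\deg H\le2$ you are done. Otherwise $d=\deg H\ge3$, and the top form of $Q$ is a product of $n$ distinct noncoordinate lines $w=sz$ with $s^d=-h/\bar h$, where $h$ is the leading coefficient of $H$.

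Your observation is the right starting point: a cancellation $az^M+bw^M\in(q)$ with $ab\ne0$ forces $s^M$ to be constant on the slopes. By itself it gives no contradiction, because such cancellations genuinely occur. The paper splits according to the group $\mu_e$ generated by the slope ratios.
\begin{itemize}
\item If $e=d$, cancellation is possible only in degrees divisible by $d$. Those terms are stripped off using powers of the relation $H(z)+H^*(w)\equiv zw \pmod q$. The leftover mixed terms, or the test cubic $z^{3-j}w^j$, then cannot be matched on three distinct slopes (Lemma~\ref{lem5-2}).
\item If $e<d$, Laurent $d$-th roots at infinity together with B\'ezout give $q\mid U(z)-V(w)$ with $\deg U=\deg V=e$ (Lemma~\ref{lem6-2}). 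Here cancellations occur in every degree divisible by $e$, since $U^k-V^k\in(q)$. So the paper abandons the degree filtration and counts ranks over $\C[t]$, with $t=U(z)=V(w)$. The quotient has rank $en$, while $\C[z]+\C[w]$ contributes rank at most $2e-1$ (Proposition~\ref{prop7-1}).
\end{itemize}
Without an argument of this kind, your proposal establishes only the heuristic count, not the degree bound.
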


Here and throughout, the term \emph{ellipse} includes the circular case.
In particular, Theorem~\ref{thm1-1} rules out holes as well as a nonelliptic outer boundary, although no smoothness is assumed.
Its proof rests on the following algebraic result about additive separation in the coordinate rings of plane curves.

\begin{theorem}\label{thm1-2}
Let
\[
F(z,w)=P(z)+Q(w)+czw\in\C[z,w], \qquad c\ne0, \qquad \deg P=\deg Q=d\ge3.
\]
Let $q$ be a nonconstant square-free divisor of $F$.
Write
\[
\mathscr A_q=\C[z,w]/(q),
\]
and let $\mathscr A_q^z$ and $\mathscr A_q^w$ be the images of $\C[z]$ and $\C[w]$ in $\mathscr A_q$.
Assume that every class modulo $q$ has a representative $A(z)+B(w)$, or equivalently that
\begin{equation}\label{1-1}
\mathscr A_q=\mathscr A_q^z+\mathscr A_q^w
\end{equation}
as complex vector spaces.
Then
\[
\deg q\le2.
\]
No irreducibility assumption is imposed on $q$.
\end{theorem}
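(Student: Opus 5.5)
The plan is to compare growth rates: count the dimension of $\mathscr A_q$ in a degree filtration and show that \eqref{1-1} forces $n:=\deg q\le 2$. First I pin down the geometry at infinity. The top-degree part of $F$ is $p_dz^d+q_dw^d$ (since $d\ge3$, the term $czw$ is not top degree). This form is square-free, and none of its linear factors is $z$ or $w$. The leading form of $q$ divides it, so it is a product $\prod_{i=1}^n(z-\zeta_iw)$ of $n$ distinct factors with $\zeta_i\ne0$ and $\zeta_i^d=-q_d/p_d$.

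Consequently, the projective closure of $\{q=0\}$ meets the line at infinity transversally in $n$ distinct points. It is smooth there, and there are exactly $n$ places at infinity, $P_1,\dots,P_n$. At each $P_i$ both $z$ and $w$ have a simple pole, with local expansions $w=t^{-1}$ and $z=\zeta_it^{-1}+O(1)$. Transversality gives two facts:
\begin{itemize}
\item the degree filtration on $\mathscr A_q$ coincides with the pole-order filtration: $\bar g\in V_m$, meaning the pole order is $\le m$ at every $P_i$, iff $\bar g$ has a representative of degree $\le m$;
\item for $m\ge n-2$,
\[
\dim V_m=\binom{m+2}{2}-\binom{m-n+2}{2}=nm-\tfrac{n(n-3)}2 .
\]
\end{itemize}
No irreducibility is needed for this step, since square-freeness of $q$ suffices.

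Next I bound the dimension of separated elements of bounded pole order. Let $S_m=\{\overline{A(z)+B(w)}\}\cap V_m$. By \eqref{1-1}, $S_m=V_m$. So it suffices to prove $\dim S_m\le 2m+C$ for a constant $C$ independent of $m$; if $n\ge3$ this contradicts $\dim V_m\sim nm$. The cheap part is that the pairs with $\deg A,\deg B\le m$ contribute at most $2m+1$ dimensions. The difficulty is \emph{cancellation at infinity}. A sum $A+B$ with $\deg A=\deg B=k>m$ can still lie in $V_m$. This requires the top form $az^k+bw^k$ to vanish at all $(\zeta_i:1)$, i.e.\ all $\zeta_i^k$ are equal. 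Since the $\zeta_i$ differ by $d$-th roots of unity, this does happen for $k$ in an arithmetic progression. Then the lower-order terms must also cancel to order $m$ at every place.

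To control the cancellation, I will work with the Laurent expansions at the places. At $P_i$, the curve $F=0$ (hence $q=0$) gives $z=\zeta_i w+\sum_{j\ge0}c_{ij}w^{-j}$. Here the coefficients are governed by the relation $P(z)+Q(w)=-czw$. The nonzero mixed term $czw$ with $c\ne0$ makes the subleading coefficients of the branches genuinely differ from a pure scaling $z=\zeta_iw$. I will show that for $n\ge3$ places, the conditions "$A(z(w))+B(w)$ has pole order $\le m$ at $P_i$" for $i=1,\dots,n$ pin down $(A,B)$ modulo the kernel of $\C[z]\oplus\C[w]\to\mathscr A_q$ up to a space of dimension $2m+O(1)$. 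Concretely, I will try to prove a \emph{degree-reduction lemma}: any class in $S_m$ has a representative $A+B$ with $\deg A,\deg B\le m+C$, with $C$ depending only on $d$ and $c$.

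The strategy for the lemma is to compare three places $P_1,P_2,P_3$. Cancellation of the top $k-m$ coefficients at all three yields a triangular linear system on the top $k-m$ coefficients of $A$ and $B$. For large $k-m$, the term $c\,zw$ (which forces $c_{i0}\ne0$-type corrections with $i$-dependent ratios) makes the system have only the trivial solution modulo elements already in the kernel. The kernel is explicitly generated by multiples of the identity $P(z)+Q(w)\equiv -czw$ only when $q$ is close to $F$, and one has to check that those multiples do not produce separated relations.

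This degree-reduction step is the main obstacle. The filtration count and the geometry at infinity are routine. Everything hinges on showing that the mixed term $c zw$ prevents unbounded cancellation among $z^k$ and $w^k$ on a curve with at least three places at infinity. If the direct coefficient comparison becomes unwieldy, my fallback is the derivation $D=q_w\partial_z-q_z\partial_w$, which preserves $(q)$. It sends $A(z)+B(w)$ to $q_wA'(z)-q_zB'(w)$. The aim would be to derive a bound on $\dim S_m$ from the pole orders of $D$ applied to separated elements, using $n\ge3$ distinct slopes $\zeta_i$.
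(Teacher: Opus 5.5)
Your filtration count is fine: transversality at infinity does identify the degree filtration with the pole-order filtration, and $\dim V_m=nm-n(n-3)/2$ for large $m$. \textbf{The gap} is that the whole theorem then rests on the degree-reduction lemma. You do not prove it, and the mechanism you propose for it fails in one of the two cases that must be treated. That mechanism is: the mixed term $czw$ blocks unbounded cancellation of $z^k$ against $w^k$ on three branches, with the relevant kernel coming from multiples of $P+Q\equiv-czw$.

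You only know that the slopes have a common $d$-th power. Suppose their ratios generate a proper subgroup $\mu_e\subsetneq\mu_d$, so $3\le n\le e\le d/2$; nothing in your setup excludes this. Take Laurent roots $X=\alpha z+O(1)$, $Y=\beta w+O(1)$ with $X^d=P$ and $Y^d=-Q$. On each branch $(X/Y)^d-1=-czw/Y^d=O(t^{d-2})$, so the mixed term only enters at relative order $t^{d-2}$. The limits of $X/Y$ on the various branches have a common $e$-th power $\lambda$, so $X^e-\lambda Y^e=O(t^{d-e-2})$ with $d-e-2\ge e-2\ge1$. Hence $U=[X^e]_+$ and $V=\lambda[Y^e]_+$, both of degree $e$, satisfy $U(z)-V(w)=O(t)$ on all $n$ branches. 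B\'ezout then forces $q\mid U(z)-V(w)$: the intersection is at least $e+1$ at each point at infinity of a component $p$, against the total $e\deg p$.

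Consequently $U(z)^k-V(w)^k\in(q)$ for every $k$. These are separated polynomials of unbounded degree that vanish identically on every branch. They have nothing to do with the relation $F=0$, since $\deg(U-V)=e<d$. So your triangular system has nontrivial solutions for arbitrarily large $k-m$. Your lemma is actually true in this case, but it can only be proved after this kernel has been found. The dichotomy on the ratio group, and the construction of $U,V$, are the missing idea.

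Once $q\mid U(z)-V(w)$ is known, the paper needs no growth estimate. With $t=U(z)=V(w)$, $\mathscr A_q$ is free over $\C[t]$ with basis $z^iw^j$ ($0\le i<e$, $0\le j<n$). Meanwhile $\mathscr A_q^z+\mathscr A_q^w$ spans a $\C(t)$-space of dimension at most $2e-1$, so $(\AS)$ would give $en\le 2e-1$.

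In the complementary case, where the ratios generate all of $\mu_d$, your lemma does hold with $C=d-2$. Its proof, however, is the descent the paper carries out. Cancellation of the top terms forces $d\mid N$. One then subtracts $\gamma\bigl(P^\ell-(-Q)^\ell\bigr)\equiv-\gamma czw\sum_r P^{\ell-1-r}(-Q)^r \pmod q$. The resulting top mixed term of degree $N-d+2$ has principal vector proportional to $(s)_{s\in S}$ (slopes $w=sz$), which cannot lie in $\Span\{(1),(s^2)\}$ when $|S|\ge3$. The paper uses exactly this to show directly that $z^{3-j}w^j$ has no separated representative, which is shorter than a dimension count. Your derivation fallback addresses neither case.
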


We call the polynomial extension hypothesis in Theorem~\ref{thm1-1} property $(\KS)$.
For the balanced curves arising from $(\KS)$, Theorem~\ref{thm1-2} says that additive separation can hold only on divisors of degree at most two, even when the divisor is reducible.
This is the degree reduction needed to recover a quadratic boundary.

\subsection*{Earlier work and the present contribution}

Tikaradze's planar theorem assumes a degree-two algebraic-harmonic relation, formulated as the rational-function hypothesis in \cite[Theorem~1.1]{Tikaradze2021}.
Property $(\KS)$ applied to $|z|^2$, however, yields only
\[
zw-H(z)-H^*(w)=0
\]
on the complexified boundary, with no control on $\deg H$.
Thus Theorem~\ref{thm1-2} supplies the missing high-degree step: it establishes the degree conclusion predicted by \cite[Conjecture~1.2]{Tikaradze2021} for square-free,
possibly reducible divisors of $P(z)+Q(w)+czw$ with $\deg P=\deg Q$ and $c\ne0$.
The other alternative in that conjecture, linearity in one coordinate, is excluded here by the balanced bidegree as soon as the total degree exceeds two.

The additive-separation hypothesis is Tikaradze's KS-polynomial property~\cite[Definition~3.1]{Tikaradze2021}.
The subspace-sum argument used at the end also appears in~\cite[Proposition~2.1]{Tikaradze2021}.
Before that comparison can be applied, we must analyze the ratio group at infinity, construct a common parameter $t=U(z)=V(w)$ in the coordinate ring, and compute the rank of the coordinate ring in the reducible case.

Related algorithmic work studies separated elements of polynomial ideals and separated representatives on algebraic curves \cite{Buchacher2020,Buchacher2024a,Buchacher2024b,Buchacher2024c}.
Those papers address specific ideals or functions, rather than separation of every class, and do not give a uniform bound $\deg q\le2$.
Render's multi-factor theorem~\cite{Render2017} assumes at least three nonconstant factors, two of which have a common zero; the active divisor used below need not have this form.

The broader algebraic background includes the Fischer-operator approach \cite{Ebenfelt2005,Chamberland2001,Render2008,Render2016},
work on analytic continuation and polynomial decomposition \cite{Lundberg2009,Lundberg2011}, and the connection with finite-term recurrences for planar orthogonal polynomials \cite{Putinar2007,Khavinson2010}.

\subsection*{Outline of the proof}

Property $(\KS)$ is used at two separate points.
The datum $|z|^2$ first places the boundary on an algebraic curve.
The remaining polynomial data then impose additive separation on the components that meet the boundary infinitely often.

Write
\[
z=x+iy,\qquad w=x-iy.
\]
The harmonic extension of $|z|^2$ has the form $H(z)+H^*(\bar z)$, so
\[
\Psi(z,w)=zw-H(z)-H^*(w)
\]
vanishes on the complexified boundary.
Let $q$ be the square-free product of the irreducible factors of $\Psi$ that meet the boundary infinitely often.
Applying B\'ezout's theorem one factor at a time shows that every class in $\C[z,w]/(q)$ has a representative $A(z)+B(w)$, as required in Theorem~\ref{thm1-2}.
We keep the full square-free product because different parts of the boundary may lie on different factors.

The proof of Theorem~\ref{thm1-2} takes place at infinity.
The leading form of $q$ is a product of distinct lines $w=sz$, and the ratios of their slopes generate a subgroup of the $d$-th roots of unity.
If this subgroup is the whole group, a descent in degree ends with a cubic class that cannot be separated on three distinct branches.
If it is proper, Laurent expansions produce polynomials $U$ and $V$ for which B\'ezout's theorem gives the exact relation
\[
q\mid U(z)-V(w).
\]
After localization at the common value $t=U(z)=V(w)\in \C[z,w]/(q)$, the two resulting dimension counts are incompatible.
Both arguments take place in the full reduced ring and therefore allow $q$ to be reducible.

The algebraic theorem gives $\deg q\le2$.
The last step is real and topological: the active curve is defined over $\R$, its real zero set is an ellipse, and the Jordan curve theorem identifies $\Omega$ with the bounded component of its complement.

Sections~\ref{sec:preliminaries} and~\ref{sec:active} carry out the passage from the boundary problem to the reduced coordinate ring.
Sections~\ref{sec:infinity}--\ref{sec:localization} prove the degree bound, and Section~\ref{sec:completion} returns to the real plane.

\section{Algebraic preliminaries}\label{sec:preliminaries}

\subsection{Complex coordinates and harmonic polynomials}

The complex-linear change of variables
\[
z=x+iy,\qquad w=x-iy
\]
identifies $\C[x,y]$ with $\C[z,w]$.
Under the real embedding
\begin{equation}\label{2-1}
\iota:\C\longrightarrow\C^2, \qquad \iota(z)=(z,\bar z),
\end{equation}
the variable $w$ becomes $\bar z$.
Moreover,
\[
\Delta=\partial_x^2+\partial_y^2 =4\partial_z\partial_w.
\]

\begin{lemma}\label{lem2-1}
A polynomial $h\in\C[x,y]$ is harmonic if and only if there exist $A,B\in\C[\zeta]$ such that
\[
h(x,y)=A(z)+B(\bar z).
\]
Moreover, if $h$ is harmonic and is real-valued on $\R^2$, the decomposition may be chosen in the form
\[
h=H(z)+H^*(\bar z),
\]
where $f^*(\zeta):=\overline{f(\bar\zeta)}$ for $f\in\C[\zeta]$.
\end{lemma}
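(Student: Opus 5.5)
We prove the two directions separately, using the identity $\Delta=4\partial_z\partial_w$ and the identification $\C[x,y]=\C[z,w]$ obtained from $z=x+iy$, $w=x-iy$.

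\emph{The easy direction.} If $h=A(z)+B(w)$ with $A,B\in\C[\zeta]$, then $\partial_w h=B'(w)$. Hence $\partial_z\partial_w h=0$ and $\Delta h=0$. Pulling back along $\iota$ gives $h(x,y)=A(z)+B(\bar z)$.

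\emph{The converse.} Write a harmonic $h$ as $h=\sum_{j,k}a_{jk}z^jw^k$, a finite sum. Then
\[
\partial_z\partial_w h=\sum_{j,k\ge1}jk\,a_{jk}z^{j-1}w^{k-1}.
\]
This vanishes identically only if $a_{jk}=0$ whenever $j\ge1$ and $k\ge1$. So only pure powers of $z$ and pure powers of $w$ survive, and $h=A(z)+B(w)$. The constant term $a_{00}$ may be placed in either summand.

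\emph{The real case.} Assume in addition that $h$ is real-valued on $\R^2$. Then
\[
h=\bar h=\overline{A(z)}+\overline{B(\bar z)}=A^*(\bar z)+B^*(z),
\]
because $\overline{A(z)}=A^*(\bar z)$ by the definition $f^*(\zeta)=\overline{f(\bar\zeta)}$. Averaging the two expressions gives
\[
h=\tfrac12\bigl(A+B^*\bigr)(z)+\tfrac12\bigl(B+A^*\bigr)(\bar z).
\]
Set $H=\tfrac12(A+B^*)$. Since $*$ is an involution, $H^*=\tfrac12(A^*+B)$, which is exactly the second coefficient polynomial. Therefore $h=H(z)+H^*(\bar z)$.

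The only point needing care is uniqueness of coefficients. A function of the form $\sum a_{jk}z^j\bar z^k$ that vanishes on $\C$ must have all $a_{jk}=0$. This holds because the substitution $z=x+iy$, $w=x-iy$ is an invertible linear change of variables, so it identifies the two polynomial rings. I do not expect any genuine obstacle.
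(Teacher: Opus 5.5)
Your proof is correct. For the characterization of harmonic polynomials you follow the paper's route: identify $\C[x,y]$ with $\C[z,w]$, use $\Delta=4\partial_z\partial_w$, and note that $\ker(\partial_z\partial_w)=\C[z]+\C[w]$. You simply write out the coefficient computation that the paper leaves implicit.

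The real case is where the two arguments differ. The paper reads $A(z)+B(\bar z)=A^*(\bar z)+B^*(z)$ as a polynomial identity in independent variables $z,w$, using Zariski density of the real slice. It then separates variables to get $A-B^*=A^*-B=C$ for a constant $C$, applies $*$ to see $C\in\R$, and sets $H=A-C/2$. You instead average $h$ with $\bar h$ and take $H=\tfrac12(A+B^*)$. Then $H^*=\tfrac12(A^*+B)$ holds because $*$ is additive, fixes real scalars, and is an involution.

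Your version is shorter and needs no separation-of-variables step. The identity $h=H(z)+H^*(\bar z)$ holds pointwise on $\R^2$ as soon as $h=\bar h$ there. So the uniqueness-of-coefficients point you flag at the end plays only a minor role in your argument. It is needed only to pass from pointwise identities to identities in $\C[x,y]$, and that is just the fact that a polynomial vanishing on $\R^2$ is zero.

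In exchange, the paper's argument gives a little more. It shows that \emph{every} separated decomposition $h=A(z)+B(\bar z)$ of a real harmonic polynomial already satisfies $B=A^*-C$ with $C\in\R$. Your argument only produces one symmetric decomposition.
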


\begin{proof}
Regard $h$ as a polynomial in the independent variables $z,w$.
Since $\Delta h=4\partial_z\partial_w h$, harmonicity is equivalent to $h=A(z)+B(w)$; the converse is immediate.
If $h$ is real-valued on $\R^2$, then
\[
A(z)+B(\bar z)=\overline{A(z)+B(\bar z)} =A^*(\bar z)+B^*(z).
\]
The real slice $\{(z,\bar z):z\in\C\}$ is Zariski dense in $\C^2$.
Hence
\[
A-B^*=A^*-B=C
\]
for some $C\in\C$.
Applying $*$ gives $C\in\R$, and the choice $H=A-C/2$ yields the stated form.
\end{proof}

The following quotient-ring condition records the form of separation that we shall use.

\subsection{Additive separation in a quotient ring}

\begin{definition}\label{def2-1}
For a nonconstant polynomial $q\in\C[z,w]$, set $\mathscr A_q:=\C[z,w]/(q)$, and let $\pi_q:\C[z,w]\to\mathscr A_q$ be the quotient map.
We say that $q$ has the \emph{additive separation property}, abbreviated $(\AS)$, if every class in $\mathscr A_q$ has a representative $A(z)+B(w)$ with $A\in\C[z]$ and $B\in\C[w]$.
Equivalently, with $\mathscr A_q^z:=\pi_q(\C[z])$ and $\mathscr A_q^w:=\pi_q(\C[w])$, one has
\begin{equation}\label{2-2}
\mathscr A_q=\mathscr A_q^z+\mathscr A_q^w
\end{equation}
as complex vector spaces.
\end{definition}

This is Tikaradze's KS-polynomial property \cite[Definition~3.1]{Tikaradze2021}, expressed in quotient-ring language.

Property $(\AS)$ also has a Fischer-operator formulation.
The operator $\partial_z\partial_w$ is surjective on $\C[z,w]$, and
\[
\ker(\partial_z\partial_w)=\C[z]+\C[w].
\]
Hence $(\AS)$ is equivalent to the surjectivity of
\[
\C[z,w]\longrightarrow\C[z,w], \qquad R\longmapsto\partial_z\partial_w(qR).
\]
Under the inverse change of variables, $\partial_z\partial_w=\frac14\Delta$.
Thus property $(\AS)$ is Render's criterion~\cite{Render2016} written in complex coordinates.
We use \eqref{2-2} below because it keeps the two summands visible.

\subsection{Projective curves and orders at infinity}

To test the quotient identity, we use the branches at infinity of $V(q)$, where $V(q)$ is the set of zero locus of $q$.
We record the required notation and the form of B\'ezout's theorem used below.

If $R\in\C[z,w]$ has total degree $r$, its homogenization is
\[
R^h(Z,W,T)=T^rR(Z/T,W/T).
\]
If two projective plane curves of degrees $r$ and $s$ have no common irreducible component, B\'ezout's theorem says that the sum of their local intersection multiplicities in $\PP^2_{\C}$ is $rs$;
see \cite[Theorem~I.7.7 and Corollary~I.7.8]{Hartshorne1977}.

Near a point at infinity with $Z\ne0$, we work in the chart $Z=1$ and use
\[
t=T/Z=1/z, \qquad v=W/Z=w/z.
\]
When the curve is smooth and transverse to $T=0$, $t$ is a local parameter.
For a Laurent series $L(t)$, the notation $L(t)=O(t^k)$ means that every term has exponent at least $k$.
Thus, for a degree-$r$ polynomial $R$ evaluated on a branch $(z(t),w(t))$,
\[
R^h(1,w(t)/z(t),t)=t^rR(z(t),w(t)).
\]

\section{From boundary data to additive separation}\label{sec:active}

The curve obtained from the datum $|z|^2$ may have components that meet the boundary only finitely many times.
Moreover, a single Jordan component of the boundary need not lie on one irreducible factor.
We therefore take the reduced product of all factors met infinitely often by the boundary.
This is the complex-coordinate form of the Fischer-operator construction in \cite[Theorem~1]{Render2016}; retaining every such factor will be important in the reducible case.

\subsection{The polynomial supplied by \texorpdfstring{$|z|^2$}{the modulus squared}}

Apply the hypothesis of Theorem~\ref{thm1-1} to
\[
f(x,y)=x^2+y^2=|z|^2.
\]
Let $h\in\R[x,y]$ be a harmonic polynomial satisfying $h=|z|^2$ on $\partial\Omega$.
By Lemma~\ref{lem2-1},
\[
h=H(z)+H^*(\bar z)
\]
for some $H\in\C[\zeta]$.
Consequently,
\begin{equation}\label{3-1}
\Psi(z,w):=zw-H(z)-H^*(w)
\end{equation}
vanishes on $\iota(\partial\Omega)$.
It is nonzero because the coefficient of $zw$ is $1$.

Factor $\Psi$ in $\C[z,w]$.
An irreducible factor $p$ of $\Psi$ is called \emph{active} if $V(p)\cap\iota(\partial\Omega)$ is infinite.
Choose one representative from each associate class of active factors and set
\begin{equation}\label{3-2}
q:=\prod_{p\ \mathrm{active}}p.
\end{equation}

\begin{lemma}\label{lem3-1}
The polynomial $q$ is a nonconstant square-free divisor of $\Psi$, and
\begin{equation}\label{3-3}
\iota(\partial\Omega)\subset V(q).
\end{equation}
\end{lemma}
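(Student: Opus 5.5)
The proof splits into three parts: square-freeness and divisibility, which are immediate from the construction; nonconstancy; and the containment \eqref{3-3}, which is the substantive point.

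\textbf{Square-freeness and divisibility.} By construction $q$ is a product of pairwise non-associate irreducible factors of $\Psi$. Since $\C[z,w]$ is a UFD, $q$ divides $\Psi$ and is square-free.

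\textbf{Nonconstancy.} It suffices to show that at least one active factor exists. Write $\Psi=c\,p_1^{e_1}\cdots p_m^{e_m}$. Because $\Psi$ vanishes on $\iota(\partial\Omega)$, we have
\[
\iota(\partial\Omega)\subset\bigcup_j V(p_j).
\]
The set $\partial\Omega$ is infinite, since it is a nonempty union of Jordan curves, and $\iota$ is injective. A finite union of sets covers an infinite set only if some member meets it in infinitely many points. Hence some $p_j$ is active, and $q$ is nonconstant.

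\textbf{The containment \eqref{3-3}.} Let $p$ be a non-active irreducible factor of $\Psi$. Then $V(p)\cap\iota(\partial\Omega)$ is finite, so the union over all non-active factors of these intersections is a finite set $E\subset\iota(\partial\Omega)$. Every point of $\iota(\partial\Omega)\setminus E$ lies in $V(q)$.

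It remains to place the finitely many points of $E$ in $V(q)$. Take a point $\iota(z_0)\in E$ with $z_0\in\partial\Omega$. Then $z_0$ lies on some Jordan component $\Gamma$ of $\partial\Omega$. Since $\Gamma$ is homeomorphic to a circle, $z_0$ is an accumulation point of $\Gamma\setminus\iota^{-1}(E)$. So there are points $z_k\to z_0$ with $\iota(z_k)\in V(q)$. The set $V(q)$ is closed and $\iota$ is continuous, so $\iota(z_0)\in V(q)$.

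\textbf{Main point.} The only step with real content is the accumulation argument just given, because a finite exceptional set cannot be discarded without using the topology of the boundary. It works precisely because every point of a Jordan curve is non-isolated. The rest is bookkeeping with unique factorization.
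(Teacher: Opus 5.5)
Your proof is correct and is essentially the paper's argument. Unique factorization gives square-freeness and divisibility; a pigeonhole count over the finitely many irreducible factors of $\Psi$ produces an active factor; and the finitely many boundary points on inactive factors are recovered by approximating them along their Jordan component, using that $V(q)$ is closed. The only cosmetic difference is that you argue directly with the finite set of boundary points lying on inactive factors, whereas the paper sets $E=\iota(\partial\Omega)\setminus V(q)$ and derives a contradiction.
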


\begin{proof}
The product in \eqref{3-2} is square-free by construction and divides $\Psi$.
There must be at least one active factor.
Indeed, if every irreducible factor met $\iota(\partial\Omega)$ in only finitely many points, then the zero set of $\Psi$ would meet the embedded boundary in a finite set,
whereas $\Psi$ vanishes on the infinite set $\iota(\partial\Omega)$.

Let
\[
E:=\iota(\partial\Omega)\setminus V(q).
\]
Every point of $E$ lies on an inactive factor of $\Psi$, so $E$ is finite.
Fix $\iota(z_0)\in E$.
The point $z_0$ lies on one of the Jordan components of $\partial\Omega$ and is therefore a limit of distinct points of that component.
Since $E$ is finite, we may choose such a sequence outside $E$.
Its image under $\iota$ lies in $V(q)$ and converges to $\iota(z_0)$.
The set $V(q)$ is closed, hence $\iota(z_0)\in V(q)$, a contradiction.
Therefore $E$ is empty and \eqref{3-3} follows.
\end{proof}

The construction of $q$ used only the datum $|z|^2$.
We now apply property $(\KS)$ to arbitrary polynomial data and obtain additive separation modulo $q$.

\subsection{Additive separation on the active curve}

Although Theorem~\ref{thm1-1} is stated for real polynomials, its hypothesis extends immediately to complex polynomial data.
Given $\Phi\in\C[z,w]$, set
\[
g(x,y)=\Phi(x+iy,x-iy).
\]
Write $g=g_1+ig_2$ with $g_1,g_2\in\R[x,y]$.
Apply the hypothesis to $g_1$ and $g_2$ and combine the resulting real harmonic polynomials.
By Lemma~\ref{lem2-1}, there are $A,B\in\C[\zeta]$ such that
\begin{equation}\label{3-4}
\Phi(z,\bar z)=A(z)+B(\bar z) \qquad (z\in\partial\Omega).
\end{equation}

\begin{proposition}\label{prop3-1}
The active polynomial $q$ satisfies $(\AS)$:
\[
\mathscr A_q=\mathscr A_q^z+\mathscr A_q^w.
\]
\end{proposition}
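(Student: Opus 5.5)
Fix $\Phi\in\C[z,w]$. By \eqref{3-4} there are $A,B\in\C[\zeta]$ with $\Phi(z,\bar z)=A(z)+B(\bar z)$ on $\partial\Omega$. Set $G:=\Phi-A(z)-B(w)$, so $G$ vanishes on $\iota(\partial\Omega)$. The claim is that $q\mid G$, which gives $\pi_q(\Phi)=\pi_q(A)+\pi_q(B)\in\mathscr A_q^z+\mathscr A_q^w$. Since $\Phi$ is arbitrary, this is exactly \eqref{2-2}.

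To prove $q\mid G$, I argue one factor at a time. Let $p$ be an active irreducible factor. Since $q$ is a product of pairwise non-associate irreducibles and $\C[z,w]$ is a UFD, it suffices to show $p\mid G$ for every such $p$.

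Suppose $p\nmid G$. Because $p$ is irreducible, $p$ and $G$ then have no common factor. By B\'ezout's theorem, applied to the projective closures $V(p^h)$ and $V(G^h)$, which share no component, the affine set $V(p)\cap V(G)$ is finite, of cardinality at most $\deg p\cdot\deg G$. This set contains $V(p)\cap\iota(\partial\Omega)$, because $G$ vanishes on $\iota(\partial\Omega)$. That set is infinite since $p$ is active, a contradiction. Hence $p\mid G$ for all active $p$, and so $q\mid G$.

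A side case: if $G=0$, the divisibility is trivial. If $G$ is a nonzero constant, it cannot vanish on the nonempty set $\iota(\partial\Omega)$, so this case does not occur. The only point needing care is that homogenization keeps $p^h$ irreducible and not a factor of $G^h$ whenever $p\nmid G$. This holds because the dehomogenization of any common factor of $p^h$ and $G^h$ would be a common factor of $p$ and $G$, since $p^h$ has no factor of $T$. Alternatively, the finiteness of $V(p)\cap V(G)$ can be taken as the standard fact about affine plane curves without a common component. I do not expect a real obstacle; the step to check most carefully is the passage from real data to complex $\Phi$ in \eqref{3-4}, which the paper has already carried out.
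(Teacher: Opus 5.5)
Your proposal is correct and follows the paper's proof essentially step by step. You set $G=\Phi-A(z)-B(w)$, apply B\'ezout to each active irreducible factor $p$ (the infinite set $V(p)\cap\iota(\partial\Omega)$ lies in $V(G)$, which forces $p\mid G$), and conclude $q\mid G$ by pairwise coprimality; your extra check that homogenization preserves the absence of a common component is a harmless detail that the paper leaves implicit.
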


\begin{proof}
Fix $\Phi\in\C[z,w]$ and choose $A,B$ as in \eqref{3-4}.
Put
\[
G(z,w)=\Phi(z,w)-A(z)-B(w).
\]
If $G\equiv0$, then $q\mid G$ is immediate.
Assume henceforth that $G\ne0$.
Since $G$ vanishes on the infinite set $\iota(\partial\Omega)$, it is necessarily nonconstant.
For every active irreducible factor $p$, the set $V(p)\cap\iota(\partial\Omega)$ is infinite and is contained in $V(G)$.
If $p$ did not divide $G$, B\'ezout's theorem would imply that $V(p)\cap V(G)$ is finite~\cite[Corollary~I.7.8]{Hartshorne1977}.
Hence $p\mid G$.
Distinct active factors are pairwise coprime, so their product divides $G$:
\[
q\mid G=\Phi-A-B.
\]
Thus the class of $\Phi$ modulo $q$ has a representative $A(z)+B(w)$ and $(\AS)$ follows.
\end{proof}

We may now leave the boundary problem aside.
If $H=0$ or $\deg H\le2$, then $\deg q\le2$ because $q\mid\Psi$.
Otherwise, let $d=\deg H\ge3$.
Since $\deg H^*=d$, the polynomial \eqref{3-1} has the form required in Theorem~\ref{thm1-2}, with
\[
P=-H,\qquad Q=-H^*,\qquad c=1.
\]
Sections~\ref{sec:infinity}--\ref{sec:localization} prove that theorem.

\section{Branches at infinity and ratio groups}\label{sec:infinity}

At infinity, the leading form of $F$ is a binomial, so the leading form of each divisor is a product of distinct noncoordinate lines.
The slopes of these lines provide the test points for separated principal parts.

Throughout Sections~\ref{sec:infinity}--\ref{sec:localization}, let
\begin{equation}\label{4-1}
F(z,w)=P(z)+Q(w)+czw, \qquad c\ne0, \qquad \deg P=\deg Q=d\ge3,
\end{equation}
and let $q$ be a nonconstant square-free divisor of $F$.
Write
\[
m=\deg q.
\]
Let $a$ and $b$ be the leading coefficients of $P$ and $Q$, respectively.
The degree-$d$ homogeneous part of $F$ is
\begin{equation}\label{4-2}
F_d(z,w)=az^d+bw^d.
\end{equation}

\begin{lemma}\label{lem4-1}
There is a set $S\subset\C^*$ of cardinality $m$ and a constant $\rho\ne0$ such that
\begin{equation}\label{4-3}
q_m(z,w)=\rho\prod_{s\in S}(w-sz),
\end{equation}
where
\begin{equation}\label{4-4}
s^d=-a/b \qquad(s\in S).
\end{equation}
In particular:
\begin{enumerate}
\item the projective closure of $V(q)$ has exactly $m$ distinct smooth points at infinity, one for each $s\in S$;
\item every such branch is transverse to the line at infinity and can be written
\begin{equation}\label{4-5}
z=t^{-1},\qquad w=st^{-1}+O(1);
\end{equation}
\item $q$ has degree $m$ in each variable, with nonzero constant leading coefficient when viewed as a polynomial in either $z$ or $w$.
\end{enumerate}
\end{lemma}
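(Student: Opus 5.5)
The plan is to read off the leading form of $q$ from that of $F$, and then pass to the line at infinity. Write $F=qr$. Leading forms are multiplicative, so $q_m r_{d-m'}=F_d$, where $F_d=az^d+bw^d$ by \eqref{4-2}; here $d$ is the total degree of $F$, because the mixed term $czw$ has degree $2<d$. Since $a,b\ne0$, the binary form $az^d+bw^d$ factors as
\[
b\prod_{s^d=-a/b}(w-sz),
\]
a product of $d$ distinct linear factors. None of these lines is $z=0$ or $w=0$. By unique factorization in $\C[z,w]$, the homogeneous divisor $q_m$ is a constant $\rho\ne0$ times a product of $m$ of these lines, all distinct. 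This gives a set $S$ of $m$ roots of $s^d=-a/b$, so $S\subset\C^*$, and proves \eqref{4-3} and \eqref{4-4}.

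Next I would find the points at infinity. The projective closure $V(q^h)$ meets $T=0$ where $q_m(Z,W)=0$, that is, at the $m$ distinct points $[1:s:0]$ with $s\in S$. Each such point has $Z\ne0$, so we may work in the chart $Z=1$ with coordinates $t=T/Z$ and $v=W/Z$, as in Section~\ref{sec:preliminaries}. There the local equation is
\[
q^h(1,v,t)=q_m(1,v)+t\,(\cdots).
\]
Because $s$ is a simple root of $q_m(1,v)$, we get $\partial_v q^h(1,s,0)\ne0$. Hence the point is smooth. Its tangent line is not $t=0$, so the curve is transverse to the line at infinity.

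For item (2), the implicit function theorem gives $v=s+O(t)$ as an analytic function of $t$, so $t$ is a local parameter at $[1:s:0]$. Translating back via $z=1/t$ and $w=v/t$ gives exactly \eqref{4-5}. One point needs care: the branch through each point at infinity must be unique. This follows from smoothness, since near a smooth point the curve is locally a single analytic disc.

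For item (3), I would look at the coefficients of $z^m$ and $w^m$ in $q$. These coincide with the corresponding coefficients of $q_m$, namely $\rho\prod_{s\in S}(-s)$ and $\rho$. Both are nonzero because $0\notin S$. Since $q$ has total degree $m$, it follows that $q$ has degree exactly $m$ in each variable, with constant leading coefficient.

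The only genuine subtlety in the argument is the distinctness of the roots of $az^d+bw^d$, which is what makes $q$ square-free at infinity. This holds because $a$ and $b$ are both nonzero and we are in characteristic zero; everything else is routine.
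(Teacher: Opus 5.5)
Your proposal is correct and follows essentially the same route as the paper. It factors $F_d=az^d+bw^d$ into $d$ distinct noncoordinate lines and identifies $q_m$ with $m$ of them, then uses the simple root $v=s$ of $q^h(1,v,0)$ and the implicit function theorem in the chart $Z=1$ for items (1)--(2). Item (3) then comes from the nonzero coefficients of $z^m$ and $w^m$ in $q_m$; note also that $r_{d-m'}$ in your first paragraph should read $r_{d-m}$.
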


\begin{proof}
Write $F=qr$.
Then
$\deg F=\deg q+\deg r$
and comparison of the highest homogeneous parts gives
\[
F_d=q_mr_{d-m}.
\]
The polynomial $az^d+bw^d$ is a product of $d$ distinct noncoordinate linear forms: its slopes are the simple nonzero roots of $\xi^d+a/b$.
Hence $q_m$ is the product of $m$ of these factors, which gives \eqref{4-3} and \eqref{4-4}.

In the chart $Z=1$, put $v=W/Z$ and $t=T/Z$.
The homogenization of $q$ has the form
\[
q^h(1,v,t)=\rho\prod_{s\in S}(v-s)+tR(v,t).
\]
At $(v,t)=(s,0)$, the derivative with respect to $v$ is nonzero.
The implicit function theorem gives a unique smooth branch $v=v_s(t)$ with $v_s(0)=s$, and $t$ is a local parameter on this branch.
This yields \eqref{4-5} in affine coordinates.
The coefficients of $w^m$ and $z^m$ in \eqref{4-3} are nonzero, while no lower homogeneous term can contain either monomial.
Part~(3) follows.
\end{proof}

Choose one $s_0\in S$.
Equation \eqref{4-4} says that
\[
S\subset s_0\mu_d,
\]
where $\mu_d$ is the cyclic group of $d$-th roots of unity.
Define the \emph{ratio group}
\begin{equation}\label{4-6}
G(S):=\langle s/s':s,s'\in S\rangle\le\mu_d.
\end{equation}
There is a unique positive divisor $e\mid d$ such that $G(S)=\mu_e$.
We shall treat the two cases
\[
e=d\qquad\text{and}\qquad e<d.
\]

\begin{lemma}\label{lem4-2}
If $e<d$, then $S$ is contained in one coset of $\mu_e$ and
\begin{equation}\label{4-7}
m\le e\le d/2.
\end{equation}
In particular, if $m\ge3$, then
\begin{equation}\label{4-8}
d-e-2\ge e-2\ge1.
\end{equation}
\end{lemma}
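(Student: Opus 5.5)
Both claims come down to counting inside the cyclic group $\mu_d$, using Lemma~\ref{lem4-1}. Fix $s_0\in S$. By \eqref{4-4}, every $s\in S$ satisfies $(s/s_0)^d=1$, so $S\subset s_0\mu_d$. Each ratio $s/s_0$ lies in the generated group $G(S)=\mu_e$. Hence $S\subset s_0\mu_e$, which is a single coset of $\mu_e$ in $s_0\mu_d$. The slopes in \eqref{4-3} are distinct and there are $m$ of them, while the coset $s_0\mu_e$ has exactly $e$ elements, so $m\le e$. Since $e$ is a positive divisor of $d$ with $e<d$, the quotient $d/e$ is an integer at least $2$, and therefore $e\le d/2$. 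This gives \eqref{4-7}.

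For \eqref{4-8}, assume $m\ge3$. Then \eqref{4-7} gives $e\ge m\ge3$, so $e-2\ge1$. From $e\le d/2$ we get $d-e\ge e$, so $d-e-2\ge e-2$. Chaining the two inequalities yields \eqref{4-8}.

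No step here is a real obstacle. The one point to make explicit is the passage from ``ratios lie in $\mu_e$'' to ``$S$ sits in one coset of size $e$''. This works because every element is compared with the same base point $s_0$, and because the distinctness of the slopes from Lemma~\ref{lem4-1} turns containment in the coset into the bound $m\le e$.
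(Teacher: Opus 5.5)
Your proof is correct and follows essentially the same route as the paper: comparing each slope with a fixed $s_0$ gives $S\subset s_0\mu_e$, hence $m\le e$, and since $e$ is a proper divisor of $d$ you get $e\le d/2$. Inequality \eqref{4-8} then follows from $e\ge m\ge3$ and $d\ge 2e$.
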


\begin{proof}
For $s\in S$, the quotient $s/s_0$ belongs to $G(S)=\mu_e$, so $S\subset s_0\mu_e$ and $m\le e$.
A proper subgroup of the cyclic group $\mu_d$ has order at most $d/2$.
If $m\ge3$, then $e\ge3$, and \eqref{4-8} follows from $d\ge2e$.
\end{proof}

The value of $e$ determines how the proof proceeds.
If $e=d$, cancellation among separated leading terms is governed by the full group $\mu_d$.
If $e<d$, the slopes lie in a single coset of the smaller group $\mu_e$, which leads instead to a common $e$-th power.
In both cases the comparison is made on the branches at infinity.

For later use, we record how a separated polynomial looks on these branches.
If
\[
A(z)=\sum_{k=0}^N\alpha_kz^k, \qquad B(w)=\sum_{k=0}^N\beta_kw^k,
\]
then the coefficient of $t^{-N}$ in $A(t^{-1})+B(st^{-1}+O(1))$ is
\begin{equation}\label{4-9}
\alpha_N+\beta_Ns^N.
\end{equation}
We call the vector $(\alpha_N+\beta_Ns^N)_{s\in S}\in\C^m$ the \emph{principal vector} of the degree-$N$ separated part.
More generally, if $w_s(t)=st^{-1}+O(1)$ denotes the branch of slope $s$, then
\begin{equation}\label{4-10}
\alpha_r=\beta_r=0\ (r>k) \quad\Longrightarrow\quad [t^{-k}]\bigl(A(t^{-1})+B(w_s(t))\bigr) =\alpha_k+\beta_ks^k,
\end{equation}
where $[t^{-k}]$ denotes coefficient extraction.
Terms of degree below $k$ have pole order less than $k$, and the displayed hypothesis excludes contributions from higher degrees.
We shall use \eqref{4-10} only after those higher terms have been removed.

\section{The case of full ratio group}\label{sec:full}

Assume that $G(S)=\mu_d$.
If the leading terms of a separated polynomial cancel on every branch, their degree must be divisible by $d$.
The relation $F=0$ then allows us to remove those terms.
Applying this reduction once either gives a contradiction at the next mixed degree or leaves a cubic comparison, for which three distinct slopes already suffice.

\begin{lemma}\label{lem5-1}
Let $S\subset\C^*$ contain at least three distinct points.
Then there exists $j\in\{1,2\}$ such that
\begin{equation}\label{5-1}
(s^j)_{s\in S}\notin \Span\bigl\{(1)_{s\in S},(s^3)_{s\in S}\bigr\}\subset\C^m.
\end{equation}
\end{lemma}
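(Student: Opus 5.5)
We argue by contradiction and reduce to three points. Suppose both vectors $(s)_{s\in S}$ and $(s^2)_{s\in S}$ lie in $\Span\{(1)_{s\in S},(s^3)_{s\in S}\}$. Then there are constants $\lambda_1,\mu_1,\lambda_2,\mu_2\in\C$ such that
\[
s=\lambda_1+\mu_1s^3,\qquad s^2=\lambda_2+\mu_2s^3\qquad(s\in S).
\]
Restrict attention to three distinct points $s_1,s_2,s_3\in S$; it suffices to reach a contradiction there. Each identity says that a polynomial of degree at most three vanishes at three distinct points.

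For the first identity, the polynomial $\mu_1X^3-X+\lambda_1$ vanishes at $s_1,s_2,s_3$. It has the nonzero coefficient $-1$ on $X$, so it is not identically zero. Hence its degree is at least three, which forces $\mu_1\ne0$. Its roots are then exactly $s_1,s_2,s_3$, and the Vieta relations give
\[
s_1+s_2+s_3=0,\qquad s_1s_2+s_1s_3+s_2s_3=-1/\mu_1\ne0.
\]

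For the second identity, the polynomial $\mu_2X^3-X^2+\lambda_2$ vanishes at the same three points and is nonzero because of its $X^2$ coefficient. So again $\mu_2\ne0$ and its roots are $s_1,s_2,s_3$. Its $X^2$ coefficient gives $s_1+s_2+s_3=1/\mu_2\ne0$. This contradicts $s_1+s_2+s_3=0$.

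An equivalent formulation is that two monic cubics with the same three roots coincide, namely $X^3-X/\mu_1+\lambda_1/\mu_1$ and $X^3-X^2/\mu_2+\lambda_2/\mu_2$. Comparing their $X^2$ and $X$ coefficients gives $0=-1/\mu_2$, which is impossible. Hence at least one of $j=1,2$ satisfies \eqref{5-1}.

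The only point needing care is the degenerate case in which a cubic coefficient vanishes. That case is handled by the observation above: the resulting polynomial has degree at most two, is nonzero, and so cannot have three roots. The hypothesis $S\subset\C^*$ is not needed for this argument; only distinctness of the three points is used.
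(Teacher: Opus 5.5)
Your proof is correct, but it takes a more computational route than the paper. Write $v_i=(s^i)_{s\in S}$. The paper never writes down the cubic relations. It notes that $v_0,v_1,v_2$ are linearly independent, because a nonzero polynomial of degree at most two cannot vanish at three distinct points. If both $v_1$ and $v_2$ lay in $\Span\{v_0,v_3\}$, the three-dimensional space $\Span\{v_0,v_1,v_2\}$ would sit inside a space of dimension at most two.

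That dimension count needs no case split on whether the cubic coefficient vanishes and no Vieta relations, so it is shorter. Your approach treats each membership separately and therefore gives finer information:
\begin{itemize}
\item The same reasoning applied to four points shows that for $|S|\ge4$ neither membership can hold.
\item For a triple, $v_1\in\Span\{v_0,v_3\}$ forces $s_1+s_2+s_3=0$, while $v_2\in\Span\{v_0,v_3\}$ forces $s_1+s_2+s_3\neq0$. So the sum of the slopes tells you which $j$ works.
\end{itemize}
This extra information is not needed in Lemma~\ref{lem5-2}, which only uses the existence of some $j$, so the paper's shorter argument suffices.

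You are right that $S\subset\C^*$ plays no role here. The same holds for the paper's proof.
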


\begin{proof}
Put $v_i=(s^i)_{s\in S}$ for $0\le i\le3$.
The vectors $v_0,v_1,v_2$ are linearly independent: a relation $av_0+bv_1+cv_2=0$ would give a quadratic polynomial $a+bs+cs^2$ vanishing at three distinct points.
If both $v_1$ and $v_2$ belonged to $\Span\{v_0,v_3\}$, then the three-dimensional space $\Span\{v_0,v_1,v_2\}$ would be contained in a space of dimension at most two, a contradiction.
\end{proof}

\begin{lemma}\label{lem5-2}
Assume $m=|S|\ge3$ and $G(S)=\mu_d$.
Then $q$ does not satisfy $(\AS)$.
\end{lemma}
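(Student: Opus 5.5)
We argue by contradiction: assume $(\AS)$ holds for $q$, and derive a contradiction from the cubic test classes $\Phi_j=z^{3-j}w^j$, $j\in\{1,2\}$. By Lemma~\ref{lem5-1}, since $|S|\ge3$, we can fix $j$ with $(s^j)_{s\in S}\notin\Span\{(1),(s^3)\}$. On the branch of slope $s$ from Lemma~\ref{lem4-1}, $z=t^{-1}$ and $w=st^{-1}+O(1)$, so $\Phi_j=s^jt^{-3}+O(t^{-2})$. If $\Phi_j\equiv A(z)+B(w)\pmod q$, then $\Phi_j-A-B$ vanishes identically on every branch at infinity, since each branch lies in $V(q)$. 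Suppose first that $\deg A,\deg B\le3$. Comparing coefficients of $t^{-3}$ via \eqref{4-10} gives $s^j=\alpha_3+\beta_3s^3$ for all $s\in S$, which contradicts the choice of $j$. So the whole proof comes down to showing that a representative of degree $\le3$ can always be found.

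For this, we choose among all representations $\Phi_j\equiv A+B+M\pmod q$, where $M$ is a mixed polynomial (each monomial divisible by $zw$), one that minimizes the top separated degree $N=\max(\deg A,\deg B)$ together with $\deg M$. Suppose $N>3$ and $N>\deg M$. Extracting $[t^{-N}]$ on each branch gives $\alpha_N+\beta_Ns^N=0$ for every $s\in S$. Both coefficients cannot be nonzero-free: if $\beta_N=0$ then $\alpha_N=0$, contradicting the definition of $N$; so $\beta_N\ne0$ and $s^N=-\alpha_N/\beta_N$ is constant on $S$. Hence every ratio $s/s'$ is an $N$-th root of unity. Since $G(S)=\mu_d$, this forces $d\mid N$; write $N=kd$.

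With $s^d=-a/b$, we then get $\alpha_Nz^N+\beta_Nw^N=\beta_N\bigl(w^{kd}-(-a/b)^kz^{kd}\bigr)$. This expression is divisible by $bw^d+az^d=F_d$. Modulo $F$, and hence modulo $q$ because $q\mid F$, we have $F_d\equiv-(P-az^d)-(Q-bw^d)-czw$. Substituting, the top separated part is replaced by separated terms of degree $<N$ plus a mixed term of degree $N-d+2<N$, using $d\ge3$. The new mixed term contains the factor $czw$ times the quotient $(w^{kd}-\lambda z^{kd})/(bw^d+az^d)$, whose leading form is an explicit binomial sum. The delicate point is that we must not have simply traded separated degree for mixed degree.

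The main obstacle is therefore controlling the mixed terms created by this substitution, which is what the phrase ``either gives a contradiction at the next mixed degree'' refers to. Our plan is to compare, at the pole order $t^{-(N-d+2)}$, the principal vector of the new mixed part against the span $\{(1),(s^{N-d+2})\}$ attainable by separated terms. For the first reduction, with $k=1$, the mixed leading form is explicitly $c\,zw\cdot(\text{const})\sum_i s^{\,\cdot}$-type monomials. Its principal vector is proportional to $(s)_{s\in S}$ or $(s^{d-1})_{s\in S}$, and because $|S|\ge3$ we expect a Vandermonde argument in the spirit of Lemma~\ref{lem5-1} to show that this vector lies outside the span. That would give the contradiction directly unless $N-d+2\le3$, i.e.\ $N=d=3$. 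In that remaining case the substitution leaves only terms of degree $\le3$, which reduces us to the cubic comparison of the first paragraph. If the general-$k$ bookkeeping proves unwieldy, we would first show, by the same leading-term argument applied to $\Phi_j$ itself, that a minimal representation has $N<2d$. This limits the analysis to the single reduction step $k=1$.
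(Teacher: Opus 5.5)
Your opening step (a separated representative of degree $\le3$ contradicts the choice of $j$ at order $t^{-3}$) and your derivation of $d\mid N$ agree with the paper. The degree-reduction step, however, which is the core of the lemma, fails as you set it up. For $N=kd$ with $k\ge2$ you write $\alpha_Nz^N+\beta_Nw^N=F_d\,G$ with
\[
G=\frac{\beta_N}{b}\sum_{i=0}^{k-1}w^{di}\Bigl(-\frac{a}{b}z^d\Bigr)^{k-1-i},
\]
and then replace $F_d$ by $-(P-az^d)-(Q-bw^d)-czw$. Besides $-czw\,G$, this produces $-(P-az^d)G-(Q-bw^d)G$, which is not separated. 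The $i=k-1$ term alone gives the mixed monomial $-\frac{\beta_N}{b}p_{d-1}z^{d-1}w^{(k-1)d}$ of degree $N-1$, where $p_{d-1}$ is the $z^{d-1}$-coefficient of $P$; it is nonzero whenever $p_{d-1}\ne0$. For $d\ge4$ this degree exceeds $N-d+2$. So your claim that only separated terms of degree $<N$ plus one mixed term of degree $N-d+2$ appear is false, and the planned comparison at order $t^{-(N-d+2)}$ is contaminated by uncontrolled mixed terms in between.

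The missing idea is to cancel the top separated part with a polynomial that is itself separated and is divisible by all of $P+Q$, not just by $F_d$. The paper subtracts $\gamma J_k$ with $J_k=P^k-(-Q)^k$, whose degree-$N$ part $a^kz^N-(-b)^kw^N$ is proportional to $\alpha_Nz^N+\beta_Nw^N$. Then $A-\gamma P^k$ and $B+\gamma(-Q)^k$ stay separated. Since $J_k\equiv-czw\sum_{r=0}^{k-1}P^{k-1-r}(-Q)^r\pmod q$, the only new mixed term has exact degree $N_1=N-d+2$, with leading form $\gamma ck\,a^{k-1}s\,z^{N_1}$ on $w=sz$. For $k\ge2$ one shows that no separated degree in $(N_1,N)$ survives, because no multiple of $d$ lies there. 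One then uses $N_1\equiv2\pmod d$: on $S$, $s^{N_1}$ is a constant multiple of $s^2$, so separated principal vectors at order $N_1$ lie in $\Span\{(1),(s^2)\}$, which cannot contain $(s)$ when $|S|\ge3$. This reduction of the exponent modulo $d$ is what makes a three-point Vandermonde argument applicable to the span $\Span\{(1),(s^{N-d+2})\}$ you mention.

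The remaining parts of the plan also need repair.

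\emph{The case $k=1$.} Here $N-d+2=2<3$ for every $d$, so the leftover case is every $N=d\ge4$, not $N=d=3$, which contradicts $N>3$. Moreover, the substitution need not leave only terms of degree $\le3$. Separated terms of degrees $4,\dots,d-1$ (from $P-az^d$, $Q-bw^d$ and the lower parts of $A,B$) may survive. They must be removed by one more leading-term argument, namely $d\nmid k'$ for $3<k'<d$, before you compare at order $t^{-3}$.

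\emph{The minimization.} Minimizing over triples $(A,B,M)$ with arbitrary mixed $M$ is degenerate, since $(0,0,\Phi_j)$ is already such a representation. The final contradiction needs the mixed part to be exactly $\Phi_j$ plus a correction whose principal vector is known, so you must track that specific correction rather than minimize.

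\emph{The fallback.} The claim that a minimal representation has $N<2d$ ``by the same leading-term argument'' is unsupported. That argument yields only $d\mid N$, and ruling out $N\ge2d$ is precisely the $k\ge2$ case handled above.
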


\begin{proof}
Choose $j\in\{1,2\}$ as in Lemma~\ref{lem5-1} and set
\begin{equation}\label{5-2}
\tau_j(z,w)=z^{3-j}w^j.
\end{equation}
Suppose, toward a contradiction, that $q$ satisfies $(\AS)$.
There are $A,B\in\C[\zeta]$ such that
\begin{equation}\label{5-3}
\tau_j\equiv A(z)+B(w)\pmod q.
\end{equation}
The polynomials $A$ and $B$ cannot both vanish.
Indeed, otherwise $q\mid\tau_j$.
Since $m=\deg q\ge3=\deg\tau_j$, this would force $m=3$ and $q_3\mid\tau_j$.
That is impossible because $q_3$ is a product of three distinct noncoordinate lines, whereas $\tau_j$ is a monomial.
Let $N$ be the maximum of the degrees of the nonzero polynomials among $A$ and $B$.
Suppose first that $N>3$.
Since the left-hand side of \eqref{5-3} has pole order three on each branch at infinity, the order-$N$ principal vector on the right must vanish:
\begin{equation}\label{5-4}
\alpha_N+\beta_Ns^N=0 \qquad(s\in S).
\end{equation}
Neither $\alpha_N$ nor $\beta_N$ can be zero; otherwise \eqref{5-4} would force both to be zero, contrary to the definition of $N$.
Hence $s^N$ is constant on $S$.
It follows that
\[
(s/s')^N=1 \qquad(s,s'\in S).
\]
Because these ratios generate $\mu_d$, every $d$-th root of unity has $N$-th power one.
Thus
\begin{equation}\label{5-5}
d\mid N.
\end{equation}
Write $N=d\ell$, with $\ell\ge1$.
To remove the degree-$N$ terms using the relation $F=0$, set
\begin{equation}\label{5-6}
J_\ell(z,w)=P(z)^\ell-(-Q(w))^\ell.
\end{equation}
Since $q\mid F$, we have $P+Q\equiv-czw\pmod q$, and therefore
\begin{align}
J_\ell&=\bigl(P+Q\bigr)\sum_{r=0}^{\ell-1}P^{\ell-1-r}(-Q)^r \notag\\
&\equiv-czw\sum_{r=0}^{\ell-1}P^{\ell-1-r}(-Q)^r\pmod q.\label{5-7}
\end{align}
The degree-$N$ terms of $J_\ell$ are
\[
a^\ell z^N-(-b)^\ell w^N.
\]
They have the same coefficient ratio as the degree-$N$ terms of $A+B$.
Indeed, each $s\in S$ satisfies $s^d=-a/b$, so
\[
a^\ell-(-b)^\ell s^{d\ell}=0,
\]
whereas \eqref{5-4} determines the same one-dimensional space of coefficient pairs.
More explicitly,
\[
\bigl\{(x_1,x_2)\in\C^2:x_1+x_2s^N=0\text{ for every }s\in S\bigr\} \cong \C.
\]
It follows that there is a unique $\gamma\ne0$ for which
\begin{equation}\label{5-8}
A_1=A-\gamma P^\ell, \qquad B_1=B+\gamma(-Q)^\ell
\end{equation}
have no terms of degree $N$.
In the quotient ring $\C[z,w]/(q)$,
\begin{equation}\label{5-9}
A_1(z)+B_1(w) \equiv\tau_j-\gamma J_\ell \equiv\tau_j+\gamma czw \sum_{r=0}^{\ell-1}P^{\ell-1-r}(-Q)^r \pmod q.
\end{equation}

We first rule out the case $\ell\ge2$.
The highest mixed term introduced by the correction has degree $N-d+2$, and its restriction to a branch of slope $s$ is linear in $s$.
Put
\begin{equation}\label{5-10}
N_1=N-d+2.
\end{equation}
The highest homogeneous part of the right-hand side of \eqref{5-9} is
\[
\gamma czw\sum_{r=0}^{\ell-1} (az^d)^{\ell-1-r}(-bw^d)^r.
\]
On the line $w=sz$, this homogeneous polynomial is $\gamma c\ell a^{\ell-1}s z^{N_1}$, which is nonzero.
The second term on the right-hand side of \eqref{5-9} therefore has degree $N_1$, and its coefficient of order $t^{-N_1}$ on the branch of slope $s$ is
\begin{equation}\label{5-11}
\gamma c\ell a^{\ell-1}s.
\end{equation}
Since $\ell\ge2$, we have $N_1\ge5$, so the cubic $\tau_j$ does not affect this coefficient.

We claim that $A_1+B_1$ has no term of degree strictly between $N_1$ and $N$.
If not, let $k$ be the largest such degree.
Since the right-hand side of \eqref{5-9} has degree $N_1<k$, the coefficient of $t^{-k}$ on the left-hand side must vanish on every branch.
By the maximality of $k$, no higher pure term contributes at this order, and hence
\[
\alpha_k'+\beta_k's^k=0 \qquad(s\in S)
\]
where $\alpha_k'$ and $\beta_k'$ are the coefficients of $A_1$ and $B_1$ of degree $k$ terms, respectively.
Neither coefficient can be zero, so $s^k$ is constant on $S$.
Hence $d\mid k$.
But
\[
N-d+2<k<N
\]
contains no multiple of $d$: the preceding multiple is $N-d$, already below $N_1$.
This contradiction proves the claim.

There are now no terms of $A_1+B_1$ of degree strictly greater than $N_1$.
Comparison of the coefficient of $t^{-N_1}$ in \eqref{5-9} therefore involves only the degree-$N_1$ terms.
Since
\[
N_1=N-d+2\equiv2\pmod d
\]
and $s^d=-a/b$ is independent of $s\in S$, the principal vectors of degree $N_1$ lie in
\[
\Span\{(1)_{s\in S},(s^2)_{s\in S}\}.
\]
Equation \eqref{5-11} would therefore imply
\[
(s)_{s\in S}\in\Span\{(1)_{s\in S},(s^2)_{s\in S}\}.
\]
Since a polynomial of degree at most two cannot vanish at three distinct points of $S$, the vectors $(1)$, $(s)$, and $(s^2)$ are linearly independent.
This rules out $\ell\ge2$.

It remains, under the assumption $N>3$, to consider $\ell=1$.
In this case $N=d>3$.
Subtracting the multiple of $J_1=P+Q$ in \eqref{5-8} removes the degree-$d$ terms.
Since $J_1\equiv-czw\pmod q$, the right-hand side of \eqref{5-9} is $\tau_j$ plus a polynomial of degree two.
Suppose that a pure term of degree greater than three remains in $A_1+B_1$.
If $k$ is its largest degree, then comparison of the coefficient of $t^{-k}$ on the branches gives
\[
\alpha_k'+\beta_k's^k=0 \qquad(s\in S).
\]
Both coefficients are nonzero by the choice of $k$, so $s^k$ is constant on $S$.
Since the ratios of the elements of $S$ generate $\mu_d$, this forces $d\mid k$, which is impossible for $3<k<d$.
Hence no such term remains.
Comparison at pole order three now gives
\[
(s^j)_{s\in S}\in\Span\{(1)_{s\in S},(s^3)_{s\in S}\},
\]
contrary to lemma \ref{lem5-1}.

Finally, suppose that $N\le3$.
Comparing the terms of pole order three in \eqref{5-3} gives
\[
(s^j)_{s\in S}\in\Span\{(1)_{s\in S},(s^3)_{s\in S}\}.
\]
This also covers $d=3$ and $N=3$: in that case $S$ consists of all three cubic slopes, so $(s^3)_{s\in S}$ is constant while $(s^j)_{s\in S}$ is not.

The class of $\tau_j$ therefore has no separated representative modulo $q$, contrary to $(\AS)$.
\end{proof}

\iffalse
\begin{remark}\label{rem5-1}
The proof does not use irreducibility of $q$.
All coefficient comparisons are made on the local branches at infinity, using the same global coefficients of $A$ and $B$.
\end{remark}
\fi

\section{The case of proper ratio group}\label{sec:proper}

Suppose now that the ratio group is a proper subgroup.
The cubic comparison used in the previous section is no longer sufficient, but the slopes lie in a single coset of $\mu_e$.
This makes the leading $e$-th powers of the corresponding Laurent roots agree and eventually yields a separated polynomial divisible by $q$.

Assume
\begin{equation}\label{6-1}
G(S)=\mu_e,\qquad e<d,
\end{equation}
and $m\ge3$.

\subsection{Laurent roots}

Choose $d$-th roots $\alpha^d=a$ and $\beta^d=-b$.
There are unique formal Laurent series of the form
\begin{equation}\label{6-2}
\begin{split}
X(z)&=\alpha z+\alpha_0+\alpha_{-1}z^{-1}+\cdots,\\
Y(w)&=\beta w+\beta_0+\beta_{-1}w^{-1}+\cdots
\end{split}
\end{equation}
satisfying
\begin{equation}\label{6-3}
X(z)^d=P(z), \qquad Y(w)^d=-Q(w).
\end{equation}
Factoring out the leading monomial and applying the formal binomial expansion gives these series and their uniqueness.

All Laurent expansions below are taken in $\C((t))$ after substituting $z=t^{-1}$ and $w=t^{-1}v_s(t)$ on a branch of slope $s$.
In particular, we have
\begin{equation}\label{6-4}
\ord_t z(t)=\ord_t w(t)=-1, \qquad tX(z(t)),\ tY(w(t))\in\C[[t]]^\times.
\end{equation}
Let $p$ be the irreducible factor carrying the branch, let $\xi=[1:s:0]$, and let $E\in\C[z,w]$ have degree $r$.
If $E^h$ does not vanish identically on $V(p^h)$, the standard local formula for intersection multiplicity~\cite[Chapter~I, Sections~5 and 7]{Hartshorne1977} yields
\begin{equation}\label{6-5}
\begin{split}
I_\xi(p^h,E^h)&=\ord_t E^h(1,v_s(t),t)\\
&=\ord_t\!\left(t^rE(t^{-1},t^{-1}v_s(t))\right).
\end{split}
\end{equation}
Thus homogenization adds $r$ to the $t$-adic order of the affine expression.

For a Laurent series in $z$ or $w$, write $[\,\cdot\,]_+$ for its polynomial part, including the constant term.
Then
\begin{equation}\label{6-6}
X(z)^e-[X(z)^e]_+=O(z^{-1}), \qquad Y(w)^e-[Y(w)^e]_+=O(w^{-1}).
\end{equation}

On a branch at infinity, the quotient of the two Laurent roots tends to a $d$-th root of unity.
The proper-ratio assumption identifies the $e$-th powers of these limits, while the term $czw$ controls the remaining error.

\begin{lemma}\label{lem6-1}
There is a constant $\lambda\in\C^*$ such that the degree-$e$ polynomials
\begin{equation}\label{6-7}
U(z)=[X(z)^e]_+, \qquad V(w)=\lambda[Y(w)^e]_+
\end{equation}
satisfy, on every branch of $V(q)$ at infinity,
\begin{equation}\label{6-8}
R(z,w):=U(z)-V(w)=O(t).
\end{equation}
\end{lemma}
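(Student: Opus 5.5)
The plan is to read the relation $F=0$ on a branch of slope $s$ as a factored identity for the Laurent roots \eqref{6-2}, and to exploit the fact that the term $czw$ has pole order only $2$, whereas $d\ge3$. Fix $s\in S$ and the branch $z=t^{-1}$, $w=t^{-1}v_s(t)$ with $v_s(0)=s\ne0$. Since $w^{-1}=t\,v_s(t)^{-1}\in t\,\C[[t]]$, substituting into the Laurent series $X(z)$ and $Y(w)$ gives well-defined elements of $\C((t))$, as recorded in \eqref{6-4}. By \eqref{6-3} and $q\mid F$,
\[
X^d-Y^d=P(z)+Q(w)=-czw,
\]
whose order in $t$ is exactly $-2$. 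On the other hand $X^d-Y^d=\prod_{\zeta\in\mu_d}(X-\zeta Y)$. Here $X=\alpha t^{-1}+O(1)$ and $Y=\beta s t^{-1}+O(1)$, so the leading coefficient of $X-\zeta Y$ is $\alpha-\zeta\beta s$.

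These leading coefficients vanish for exactly one $\zeta$. Indeed, $(\alpha/(\beta s))^d=a/(-b s^d)=1$ by \eqref{4-4}, so $\zeta_s:=\alpha/(\beta s)\in\mu_d$, and the other $d-1$ values of $\zeta$ give distinct nonzero leading coefficients. Hence those $d-1$ factors have order exactly $-1$. Comparing orders then yields
\[
\ord_t(X-\zeta_sY)=-2+(d-1)=d-3\ge0 .
\]

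Next pass to $e$-th powers. Write
\[
X^e-\zeta_s^eY^e=(X-\zeta_sY)\sum_{i=0}^{e-1}X^{e-1-i}(\zeta_sY)^i .
\]
The sum has order at least $-(e-1)$, so
\[
\ord_t\bigl(X^e-\zeta_s^eY^e\bigr)\ge d-3-(e-1)=d-e-2\ge1
\]
by \eqref{4-8}, which applies since $m\ge3$. The key point is that $\zeta_s^e=(\alpha/\beta)^e s^{-e}$ does not depend on $s$. By Lemma~\ref{lem4-2}, $S\subset s_0\mu_e$, so $s^e=s_0^e$ for all $s\in S$. Therefore the single constant $\lambda:=(\alpha/(\beta s_0))^e\ne0$ serves every branch, and $X^e-\lambda Y^e=O(t)$ on each branch at infinity.

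Finally, replace the series by their polynomial parts using \eqref{6-6}. Since $z^{-1}$ and $w^{-1}$ are $O(t)$ on the branch, the discarded tails $X^e-U$ and $\lambda Y^e-V$ are $O(t)$, which gives \eqref{6-8}. The polynomials $U$ and $V$ have degree $e$, with leading coefficients $\alpha^e$ and $\lambda\beta^e$, both nonzero.

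The only delicate step is the order count identifying a unique "resonant" factor $X-\zeta_sY$. It requires the distinctness of the $d$ leading coefficients, which holds because $\zeta\mapsto\alpha-\zeta\beta s$ is injective. It also requires the inequality $d-e-2\ge1$, which is exactly where the hypotheses $m\ge3$ and $e<d$ are used, through \eqref{4-8}.
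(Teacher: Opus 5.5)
Your proof is correct and is essentially the paper's argument. Counting orders in $X^d-Y^d=\prod_{\zeta\in\mu_d}(X-\zeta Y)=-czw$ is the homogenized form of the paper's factorization $r^d-1=(r-\rho_s)\Theta(r)$ for $r=X/Y$ at the simple root $\rho_s=\zeta_s$, and your choice $\lambda=(\alpha/(\beta s_0))^e$, via $S\subset s_0\mu_e$, coincides with the paper's common value $\lambda=\rho_s^e$; the $e$-th power step, the bound $d-e-2\ge1$ from Lemma~\ref{lem4-2}, and the tail estimate then proceed identically.
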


\begin{proof}
Fix a branch of slope $s\in S$ and use the parameterization \eqref{4-5}.
Put
\[
r(t)=\frac{X(z(t))}{Y(w(t))}.
\]
By \eqref{6-4}, the numerator and denominator both have valuation $-1$, so $r(t)$ is a unit of $\C[[t]]$.
Equation \eqref{6-2} gives
\[
r(t)=\rho_s+O(t), \qquad \rho_s:=\frac{\alpha}{\beta s}.
\]
The choices of $\alpha$ and $\beta$, together with \eqref{4-4}, give $\rho_s^d=1$, while the definition of $\rho_s$ gives
\[
\frac{\rho_s}{\rho_{s'}}=\frac{s'}s.
\]
The ratios $\rho_s/\rho_{s'}=s'/s$ generate $\mu_e$.
Hence the values $\rho_s^e$ are independent of $s$; write their common value as $\lambda\ne0$:
\begin{equation}\label{6-9}
\rho_s^e=\lambda \qquad(s\in S).
\end{equation}

Because $q\mid F$, the equation $F=0$ holds on the branch.
Using \eqref{6-3}, we obtain
\begin{equation}\label{6-10}
r^d-1 =\frac{X^d-Y^d}{Y^d} =-\frac{czw}{Y^d} =O(t^{d-2}).
\end{equation}
Equivalently, $\ord_t(r^d-1)\ge d-2$.
The limiting root $\rho_s$ of $T^d-1$ is simple.
Factoring $r^d-1=(r-\rho_s)\Theta(r)$ with $\Theta(\rho_s)\ne0$ gives
\begin{equation}\label{6-11}
r-\rho_s=O(t^{d-2}),
\end{equation}
and hence, by \eqref{6-9},
\begin{equation}\label{6-12}
r^e-\lambda=O(t^{d-2}).
\end{equation}
Multiplying by $Y^e=O(t^{-e})$ yields
\begin{equation}\label{6-13}
X(z(t))^e-\lambda Y(w(t))^e =O(t^{d-e-2}).
\end{equation}
By Lemma~\ref{lem4-2}, the exponent in \eqref{6-13} is at least one.
The discarded tails in \eqref{6-6} are also $O(t)$, because both $z(t)^{-1}$ and $w(t)^{-1}$ have positive $t$-adic valuation.
Subtracting these tails proves \eqref{6-8}.

Since the leading terms of $U$ and $V$ are $\alpha^ez^e$ and $\lambda\beta^ew^e$, respectively, both polynomials have degree $e$.
\end{proof}

\subsection{From contact to divisibility}

For $R=U-V$, equation \eqref{6-8} gives one additional order of vanishing after degree-$e$ homogenization.
B\'ezout's theorem then shows that each irreducible factor of $q$ divides $R$.

\begin{lemma}\label{lem6-2}
Under \eqref{6-1}, if $m\ge3$, then the polynomials $U,V$ from Lemma~\ref{lem6-1} satisfy
\begin{equation}\label{6-14}
q(z,w)\mid U(z)-V(w).
\end{equation}
\end{lemma}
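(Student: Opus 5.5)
We prove the divisibility one irreducible factor at a time and then combine, exactly as in Proposition~\ref{prop3-1}. Let $p$ be an irreducible factor of $q$, of degree $m_p\ge1$. The polynomial $R=U(z)-V(w)$ has total degree $e$, since its degree-$e$ part is $\alpha^ez^e-\lambda\beta^ew^e\neq0$. Suppose, toward a contradiction, that $p\nmid R$. Then $p^h$ and $R^h$ have no common component, and B\'ezout's theorem gives
\[
\sum_{\xi\in\PP^2}I_\xi(p^h,R^h)=m_p\,e .
\]

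The heart of the argument is a lower bound for the contribution at infinity that exceeds $m_pe$. By Lemma~\ref{lem4-1}, the closure of $V(p)$ meets the line $T=0$ in exactly $m_p$ points $\xi=[1:s:0]$, one for each slope $s$ of $p$. Each of these points is a smooth point of $V(p^h)$, and the curve is transverse to $T=0$ there, with local parameter $t$. By the local formula \eqref{6-5} applied with $E=R$ and $r=e$,
\[
I_\xi(p^h,R^h)=\ord_t\bigl(t^eR(z(t),w(t))\bigr)=e+\ord_tR(z(t),w(t))\ge e+1,
\]
where the final inequality is Lemma~\ref{lem6-1}, equation \eqref{6-8}. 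The formula applies because $R^h$ does not vanish identically on $V(p^h)$, as $p\nmid R$.

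Summing over the $m_p$ points at infinity gives a total of at least $m_p(e+1)>m_pe$. Affine intersection points can only add to this, so the sum exceeds the B\'ezout number, a contradiction. Hence $p\mid R$.

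Since $q$ is square-free, its irreducible factors are pairwise coprime, and therefore their product $q$ divides $U(z)-V(w)$. This proves \eqref{6-14}.

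The step I expect to need the most care is justifying the local order formula \eqref{6-5} on every branch. The issue is that the branch of slope $s$ must lie on the specific component $p$, and $t$ must be a uniformizer at $\xi$. Both facts follow from Lemma~\ref{lem4-1}(1)--(2): the points at infinity of $V(q)$ are smooth on the whole curve, so each lies on exactly one factor and that factor is smooth there. I would also verify that the estimate $O(t)$ from Lemma~\ref{lem6-1} was derived on exactly these branches. It was, since its proof used only $q\mid F$ and the parametrization \eqref{4-5}. The hypothesis $m\ge3$ enters only through Lemma~\ref{lem4-2}, which guarantees $d-e-2\ge1$ and hence the $O(t)$ bound.
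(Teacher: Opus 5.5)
Your proposal is correct and follows the paper's proof essentially step for step. You apply B\'ezout factor by factor to $p^h$ and $R^h$, show via \eqref{6-5} and \eqref{6-8} that each of the $m_p$ smooth transverse points at infinity contributes at least $e+1$, and then use square-freeness to conclude $q\mid R$. The one detail the paper spells out that you leave implicit is that $p^h$ is irreducible (because $T\nmid p^h$); this is what makes $p\nmid R$ equivalent to $p^h$ and $R^h$ having no common component.
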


\begin{proof}
Let $p$ be an irreducible factor of $q$ and put $m_p=\deg p$.
The argument of Lemma~\ref{lem4-1}, applied to $p$, shows that its leading form is the product of the $m_p$ linear factors of $q_m$ belonging to $p$.
Its homogenization $p^h$ is irreducible: otherwise a homogeneous factorization, after setting $T=1$, would give a nontrivial factorization of $p$; a factor that becomes constant would have to be a power of $T$,
whereas $T\nmid p^h$.
Thus $V(p^h)$ has $m_p$ distinct smooth points at infinity, all transverse to $T=0$, and $t=1/z$ is a uniformizer at each of them.

Let $R=U-V$, a polynomial of degree $e$, and let $R^h$ be its degree-$e$ homogenization.
Notice that $R\ne0$, since both $U$ and $V$ have positive degree and depend on different variables.
Equation \eqref{6-8} gives on every infinity branch
\begin{equation}\label{6-15}
R^h\bigl(1,w(t)/z(t),t\bigr) =t^eR(z(t),w(t)) =O(t^{e+1}).
\end{equation}
If $p\nmid R$, equivalently, if $p^h$ and $R^h$ have no common irreducible component, the restriction of $R^h$ to each branch of $V(p^h)$ is nonzero.
At a point $\xi$ at infinity, the local intersection formula \eqref{6-5}, together with \eqref{6-15}, therefore gives
\[
I_\xi(p^h,R^h)\ge e+1.
\]
Hence the $m_p$ points at infinity alone contribute at least
\[
m_p(e+1)>m_pe
\]
to the total intersection number.
This contradicts B\'ezout's theorem \cite[Corollary~I.7.8]{Hartshorne1977}, which gives the total $m_pe$.

We conclude that $p\mid R$.
Since the same argument applies to every irreducible factor of $q$, and since $q$ is square-free, their product divides $R$.
Hence \eqref{6-14} holds.
\end{proof}

\section{Localization and the rank obstruction}\label{sec:localization}

In the coordinate ring $\C[z,w]/(q)$, put $t=U(z)=V(w)$ and $K=\C(t)$.
We shall compute the dimension of the full coordinate ring over $K$ and compare it with the two subspaces coming from $z$ and $w$.
Since $q$ may be reducible, the required freeness must be established before localization.

\begin{proposition}\label{prop7-1}
Let $q\in\C[z,w]$ be a polynomial of total degree $m$ such that
\begin{equation}\label{7-1}
\deg_z q=\deg_w q=m,
\end{equation}
and suppose that the leading coefficient of $q$, as a polynomial in either variable, is a nonzero constant.
Assume that
\begin{equation}\label{7-2}
q\mid U(z)-V(w), \qquad \deg U=\deg V=e\ge1.
\end{equation}
If $m\ge2$, then $q$ does not satisfy $(\AS)$.
\end{proposition}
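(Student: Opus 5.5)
The plan is to compare ranks over the polynomial ring $\C[t]$, where $t$ denotes the common class of $U(z)$ and $V(w)$ in $\mathscr A_q$. Dimensions are then measured after tensoring with $K=\C(t)$. By \eqref{7-2}, $U(z)\equiv V(w)\pmod q$, so $\mathscr A_q$ becomes a $\C[t]$-module via $t\mapsto \pi_q(U(z))=\pi_q(V(w))$. Both $\mathscr A_q^z$ and $\mathscr A_q^w$ are $\C[t]$-submodules, because $t$ lies in each of them: it is a polynomial in $z$ alone and also a polynomial in $w$ alone.

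The first step is to establish freeness and compute ranks before localizing. This is the point where reducibility of $q$ must be handled, and I expect it to be the main obstacle.

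By \eqref{7-1}, $q$ is monic up to a nonzero constant as a polynomial in $w$ of degree $m$. Division with remainder by $q$ in $\C[z][w]$ therefore shows that $\mathscr A_q$ is a free $\C[z]$-module with basis $1,w,\dots,w^{m-1}$. Since $U$ has degree $e\ge1$, $\C[z]$ is a free $\C[U]$-module with basis $1,z,\dots,z^{e-1}$. Combining the two, $\mathscr A_q$ is a free $\C[t]$-module of rank $me$. Here we must check that $\C[t]$ acts through $\C[U(z)]\subset\C[z]$, and this is immediate from $t=\pi_q(U(z))$.

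Next, $\C[z]\to\mathscr A_q$ is injective, since a nonzero polynomial in $z$ alone has $w$-degree $0<m$ and so cannot be divisible by $q$. Hence $\mathscr A_q^z\cong\C[z]$ is free of rank $e$ over $\C[t]$. The symmetric argument, using the monicity of $q$ in $z$, shows that $\mathscr A_q^w\cong\C[w]$ is free of rank $e$ over $\C[V]=\C[t]$.

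The second step is the dimension count. Localization is exact, so tensoring with $K$ gives
\[
\dim_K(\mathscr A_q\otimes K)=me,\qquad \dim_K(\mathscr A_q^z\otimes K)=\dim_K(\mathscr A_q^w\otimes K)=e .
\]
Both submodules contain $\C[t]\cdot 1$, which is a free module of rank one. So the images of $\mathscr A_q^z\otimes K$ and $\mathscr A_q^w\otimes K$ inside $\mathscr A_q\otimes K$ both contain the line $K\cdot1$, and their sum has $K$-dimension at most $2e-1$.

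Now suppose $(\AS)$ holds, so that $\mathscr A_q=\mathscr A_q^z+\mathscr A_q^w$. Since the sum map $\mathscr A_q^z\oplus\mathscr A_q^w\to\mathscr A_q$ is then surjective, it stays surjective after tensoring with $K$. This gives
\[
me\le 2e-1 .
\]
But $m\ge2$ gives $me\ge 2e>2e-1$, a contradiction, so $q$ does not satisfy $(\AS)$.

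The only care needed is in identifying the $\C[t]$-module structures on $\mathscr A_q^z$ and $\mathscr A_q^w$ with the standard ones on $\C[z]$ over $\C[U]$ and on $\C[w]$ over $\C[V]$. This is exactly what \eqref{7-2} supplies, and it is where the hypothesis enters.
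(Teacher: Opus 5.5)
Your proposal is correct and follows the paper's proof essentially step for step. It makes $\mathscr A_q$ free of rank $me$ over $\C[t]$ via the bases $1,w,\dots,w^{m-1}$ over $\C[z]$ and $1,z,\dots,z^{e-1}$ over $\C[U]$, uses injectivity of $\C[z]$ and $\C[w]$ into $\mathscr A_q$ to get rank $e$ for each summand, and after localizing at $\C[t]\setminus\{0\}$ compares $me$ with the bound $2e-1$ coming from the shared line $K\cdot 1$.
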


\begin{proof}
Let
\[
\mathscr A=\C[z,w]/(q),
\]
and let $\pi:\C[z,w]\to\mathscr A$ be the quotient map.
Write
\[
\mathscr A^z=\pi(\C[z]), \qquad \mathscr A^w=\pi(\C[w]).
\]
Inside $\mathscr A$, equation \eqref{7-2} gives a common element
\begin{equation}\label{7-3}
t=U(z)=V(w).
\end{equation}
We regard $\mathscr A$ as a module over $\C[t]$ through this identity.
Since the leading coefficient of $q$ in $w$ is a nonzero constant, we may rescale $q$ so that it is monic of degree $m$ in $w$.
Division in $\C[z][w]$ then gives
\begin{equation}\label{7-4}
\mathscr A=\C[z]\oplus\C[z]w\oplus\cdots\oplus\C[z]w^{m-1}.
\end{equation}
Thus $\mathscr A$ is a free $\C[z]$-module of rank $m$.

We also claim that $\C[z]$ is a free $\C[U(z)]$-module of rank $e$, with basis $1,z,\ldots,z^{e-1}$.
The equation $t=U(z)$ expresses $z^e$ as a $\C[t]$-linear combination of lower powers, and hence reduces every power of $z$ to a $\C[U(z)]$-linear combination of these $e$ elements.
For linear independence, observe that the leading degrees of $a_j(U(z))z^j$, for $0\le j<e$, are distinct modulo $e$; hence the largest one cannot cancel in a nontrivial relation.
Identifying $\C[U(z)]$ with $\C[t]$, we conclude from \eqref{7-4} that
\begin{equation}\label{7-5}
\bigl\{z^iw^j:0\le i<e,\ 0\le j<m\bigr\} \quad\text{is a $\C[t]$-basis of }\mathscr A.
\end{equation}
Thus $\mathscr A$ is free of rank $em$ over $\C[t]$.
In particular, the map $\C[t]\to\mathscr A$ is injective and every nonzero polynomial in $t$ is a nonzero divisor on $\mathscr A$.
This conclusion does not require $q$ to be irreducible or square-free, and localization therefore retains every component.

Let
\[
\Sigma=\C[t]\setminus\{0\}, \qquad K=\Sigma^{-1}\C[t]=\C(t), \qquad \mathscr A_K=\Sigma^{-1}\mathscr A.
\]
Localizing \eqref{7-5} gives the exact dimension
\begin{equation}\label{7-6}
\dim_K\mathscr A_K=em.
\end{equation}

Let
\[
L_z=\Sigma^{-1}\mathscr A^z, \qquad L_w=\Sigma^{-1}\mathscr A^w
\]
inside $\mathscr A_K$.
The natural maps $\C[z]\to\mathscr A$ and $\C[w]\to\mathscr A$ are injective.
Indeed, if a nonzero polynomial $a(z)$ belonged to the ideal $(q)$, then $a(z)=q(z,w)R(z,w)$ for some nonzero $R$, which is impossible after comparing degrees in $w$; the same argument applies to $\C[w]$.
Since $\mathscr A$ is free over $\C[t]$, localization preserves these injections.
Hence
\[
L_z\cong K\otimes_{\C[t]}\C[z], \qquad L_w\cong K\otimes_{\C[t]}\C[w]
\]
inside $\mathscr A_K$
\iffalse
There are natural 
\begin{equation}\label{7-7}
L_z\cong K[Z]/(U(Z)-t), \qquad L_w\cong K[W]/(V(W)-t).
\end{equation}
These are the base changes of $\C[z]$ and $\C[w]$ from $\C[t]$ to $K$.
Moreover, $U(Z)-t$ is irreducible over $K$: a factorization in $\C[Z,t]$ would, by degree in $t$, have a factor independent of $t$, and that factor would divide the coefficient $-1$.
Gauss's lemma now gives the claim over $\C(t)$.
The same argument applies to $V(W)-t$.
Thus $L_z$ and $L_w$ are fields 
\fi 
and
\begin{equation}\label{7-8}
\dim_K L_z=\dim_K L_w=e.
\end{equation}
Their intersection contains the common scalar field $K$, so
\begin{equation}\label{eq:sum-upper}
                 \dim_K(L_z+L_w)
                 =\dim_K L_z+\dim_K L_w-\dim_K(L_z\cap L_w)
                 \le2e-1.
\end{equation}
Assume that $q$ satisfies $(\AS)$.
The sum $\mathscr A^z+\mathscr A^w$ is stable under the multiplication by $t$, since in $\mathscr A$,
\[
t\bigl(A(z)+B(w)\bigr)=U(z)A(z)+V(w)B(w).
\]
We may therefore localize the equality $
\mathscr A=\mathscr A^z+\mathscr A^w$ to obtain
\begin{equation}\label{7-9}
\mathscr A_K=L_z+L_w.
\end{equation}
Hence
$$
\dim_K\mathscr A_K=em\le 2e-1,
$$
contrary to $m\ge2$.
\end{proof}

\begin{remark}\label{rem7-1}
The balanced bidegree hypothesis in Proposition~\ref{prop7-1} is essential.
A polynomial such as $w-z^k$ has $(\AS)$ for every $k$, because its quotient ring is simply $\C[z]$, and it divides the polynomial $w-z^k$.
This cannot happen here, because Lemma~\ref{lem4-1} gives degree $m$ in both variables and only noncoordinate leading lines.
\end{remark}

\begin{proof}[Proof of Theorem~\ref{thm1-2}]
Let $m=\deg q$.
If $m\le2$, there is nothing to prove, so assume $m\ge3$.
By Lemma~\ref{lem4-1}, the slopes $S$ and their ratio group $G(S)=\mu_e$ are defined.

If $e=d$, Lemma~\ref{lem5-2} shows directly that $q$ does not satisfy $(\AS)$, contrary to the hypothesis.

If $e<d$, Lemma~\ref{lem6-2} produces polynomials $U,V$ of degree $e$ such that $q\mid U(z)-V(w)$.
Lemma~\ref{lem4-1}(iii) verifies all balanced bidegree hypotheses of Proposition~\ref{prop7-1}, which again contradicts $(\AS)$.

These two cases exhaust the possibilities for $G(S)$.
Thus $m\ge3$ is impossible, and consequently $\deg q\le2$.
\end{proof}

\section{Proof of Theorem 1.1}\label{sec:completion}

Let $q$ again denote the active polynomial constructed in Section~\ref{sec:active}.
To return to the real plane, we first normalize it so that $q(x+iy,x-iy)$ has real coefficients.
Because $q$ may be reducible, this is done factor by factor.

For $p\in\C[z,w]$, define the conjugate-linear ring involution
\begin{equation}\label{8-1}
p^\#(z,w)=\overline{p(\bar w,\bar z)}.
\end{equation}
On the real slice $w=\bar z$,
\begin{equation}\label{8-2}
p^\#(z,\bar z)=\overline{p(z,\bar z)}.
\end{equation}

\begin{lemma}\label{lem8-1}
Every active irreducible factor can be multiplied by a nonzero scalar so that $p^\#=p$.
Consequently the active product $q$ may be chosen so that
\begin{equation}\label{8-3}
\widetilde q(x,y):=q(x+iy,x-iy) \in\R[x,y].
\end{equation}
\end{lemma}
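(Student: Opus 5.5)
The plan is to show that the involution $\#$ permutes the active irreducible factors and fixes each one up to a scalar. From this, rescaling gives $p^\#=p$, and \eqref{8-3} follows.

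First record the formal properties of $\#$. It is a conjugate-linear ring automorphism of $\C[z,w]$ of order two, since $(p^\#)^\#=p$, and it preserves total degree. Hence it sends irreducible polynomials to irreducible polynomials and respects divisibility.

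Next apply this to $\Psi$. A direct computation from \eqref{3-1} gives
\[
\Psi^\#(z,w)=\overline{\bar w\bar z-H(\bar w)-H^*(\bar z)}=zw-H^*(w)-H(z)=\Psi ,
\]
using $\overline{H(\bar w)}=H^*(w)$ and $\overline{H^*(\bar z)}=H(z)$. So $\#$ permutes the irreducible factors of $\Psi$ up to scalars.

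Now fix an active factor $p$. By \eqref{8-2}, for each point $\iota(z)\in V(p)\cap\iota(\partial\Omega)$ we have $p^\#(z,\bar z)=\overline{p(z,\bar z)}=0$. Thus $V(p^\#)$ contains the same infinite set $V(p)\cap\iota(\partial\Omega)$. Both $p$ and $p^\#$ are irreducible, so if they were not associates B\'ezout's theorem would make $V(p)\cap V(p^\#)$ finite; this is the same argument as in Proposition~\ref{prop3-1}. Hence $p^\#=\kappa p$ for some $\kappa\in\C^*$.

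Applying $\#$ again gives $p=\bar\kappa\,p^\#=\bar\kappa\kappa\,p$, so $|\kappa|=1$. Choose $\nu$ with $\nu^2=\kappa$, which gives $\bar\nu=\nu^{-1}$, and set $\tilde p=\nu p$. Then
\[
\tilde p^\#=\bar\nu\,\kappa\,p=\nu^{-1}\nu^2 p=\nu p=\tilde p .
\]
This rescaling is the one step needing care: the unimodularity of $\kappa$ is exactly what makes the square root work.

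Finally, take $q$ to be the product of these normalized representatives. Since $\#$ is multiplicative, $q^\#=q$. By \eqref{8-2}, $\widetilde q(x,y)=q(z,\bar z)$ is real for all real $(x,y)$. A polynomial in $\C[x,y]$ that is real-valued on $\R^2$ has real coefficients: apply the density argument of Lemma~\ref{lem2-1} to its imaginary part. This gives \eqref{8-3}.
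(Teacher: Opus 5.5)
Your proposal is correct and follows the paper's proof: the B\'ezout argument on the infinite set $V(p)\cap\iota(\partial\Omega)$ gives $p^\#=\kappa p$ with $|\kappa|=1$, and your choice $\nu^2=\kappa$ is the paper's $\mu/\bar\mu=\lambda$ with $\mu$ taken unimodular. Your computation of $\Psi^\#=\Psi$ is not needed but does no harm, and your justification that $q^\#=q$ forces real coefficients fills in a step the paper leaves implicit.
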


\begin{proof}
Let $p$ be active.
Whenever $p(z,\bar z)=0$, equation \eqref{8-2} gives $p^\#(z,\bar z)=0$.
Thus $V(p)$ and $V(p^\#)$ have infinitely many common points.
Since $\#$ preserves irreducibility, B\'ezout's theorem implies that they are associates~\cite[Corollary~I.7.8]{Hartshorne1977}:
\[
p^\#=\lambda p
\]
for some $\lambda\in\C^*$.
Applying $\#$ again gives $p=\bar\lambda p^\#=|\lambda|^2p$, and hence $|\lambda|=1$.
Choose $\mu\in\C^*$ such that $\mu/\bar\mu=\lambda$.
Then
\[
(\mu p)^\#=\bar\mu p^\# =\bar\mu\lambda p =\mu p.
\]
Thus replacing $p$ by $\mu p$ makes it fixed by $\#$, and $(\mu p)(x+iy,x-iy)$ has real coefficients.
Applying this normalization to each active factor gives \eqref{8-3}.
The resulting product differs from the original $q$ only by a nonzero scalar, so none of the properties of $q$ used above is changed.
\end{proof}

\begin{proof}[Proof of Theorem~\ref{thm1-1}]
Let \(H,\Psi\), and \(q\) be as in Section~3. By Lemma~3.1 and
Proposition~3.2, \(q\) is a nonconstant square-free divisor of
\[
\Psi(z,w)=zw-H(z)-H^*(w)
\]
and satisfies \((\mathrm{AS})\). If \(\deg\Psi\le 2\), then
\(q\mid\Psi\) gives \(\deg q\le 2\). Otherwise
\(d=\deg H=\deg H^*\ge 3\), and Theorem~1.2, applied with
\(P=-H\), \(Q=-H^*\), and \(c=1\), gives the same conclusion.

By Lemmas~3.1 and~8.1, after rescaling \(q\) we may write
\[
\widetilde q(x,y):=q(x+iy,x-iy)\in\mathbb R[x,y],
\qquad
\partial\Omega\subset V_{\mathbb R}(\widetilde q),
\qquad
\deg\widetilde q\le 2.
\]
Since \(\partial\Omega\) contains a Jordan curve,
\(\widetilde q\) cannot be linear or reducible quadratic.
Thus \(\widetilde q\) is an irreducible quadratic. By the
classification of real affine conics, its real zero set is a
nondegenerate ellipse and denote it by \(J\).

Every component \(\Gamma\) of \(\partial\Omega\) is a Jordan curve
contained in \(J\), and hence \(\partial\Omega=J\). 
Consequently, by the Jordan curve theorem, \(\Omega\) is the bounded interior of the ellipse \(J\).
\end{proof}

\subsection*{Data availability}
No data were used for the research described in this article.

\subsection*{Competing interests}
The authors declare that they have no competing interests.

\subsection*{AI assistance statement}
The authors used OpenAI models for language editing and preliminary proof checks. All mathematical content was independently verified by the authors, who take full responsibility for the manuscript.

\end{document}